\newtheorem{remark}{Remark}[section]
\title{A fast analysis-based discrete Hankel transform using asymptotic expansions}
\author{Alex Townsend\thanks{Department of Mathematics, Massachusetts Institute of Technology, 77 Massachusetts Avenue
Cambridge, MA 02139-4307. (ajt@mit.edu)}}
\begin{document}
\maketitle

\begin{abstract}
A fast and numerically stable algorithm is described for computing the discrete Hankel transform 
of order $0$ as well as evaluating Schl\"{o}milch and Fourier--Bessel expansions in
$\mathcal{O}(N(\log N)^2/\log\!\log N)$ operations. The algorithm is based on an 
asymptotic expansion for 
Bessel functions of large arguments, the fast Fourier transform, and the Neumann addition 
formula. All the algorithmic parameters are selected from 
error bounds to achieve a near-optimal computational cost for any accuracy goal. 
Numerical results demonstrate the efficiency of the resulting algorithm. 
\end{abstract}

\begin{keywords}
Hankel transform, Bessel functions, asymptotic expansions, fast Fourier transform
\end{keywords}

\begin{AMS}
65R10, 33C10
\end{AMS}

\section{Introduction} 
The Hankel transform of order $\nu\geq0$ of a function $f\!:\![0,1]\rightarrow\mathbb{C}$ is defined as~\cite[Chap.~9]{Poularikas_10_01}  
\begin{equation}
F_{\nu}(\omega) = \int_0^1 f(r) J_{\nu}( r\omega ) r \, dr, \qquad \omega > 0,
\label{eq:HankelTransform} 
\end{equation} 
where $J_{\nu}$ is the Bessel function of the first kind with parameter $\nu$. 
When $\nu=0$,~\eqref{eq:HankelTransform} is essentially the 
two-dimensional Fourier transform of a radially symmetric function with support 
on the unit disc and appears in the Fourier--Hankel--Abel cycle~\cite[Sec.~9.3]{Poularikas_10_01}. 
Furthermore, the Hankel transform of integer order $\nu\geq 1$ is equivalent to 
the Fourier transform of functions of the form $e^{{\rm i}\nu\theta}f(r)$, where 
$\theta\in[0,2\pi)$. Throughout
this paper we take $\nu$ to be an integer. 

Various semi-discrete and discrete variants of~\eqref{eq:HankelTransform}  
are employed in optics~\cite{Guizar_04_01}, electromagnetics~\cite{Nachamkin_80_01},
medical imaging~\cite{Higgins_88_01}, and the numerical solution of partial differential 
equations~\cite{Bisseling_85_01}. At some stage these require the computation of the following sums:
\begin{equation}
 f_k = \sum_{n=1}^N c_{n} J_{\nu}( r_k \omega_n  ), \qquad 1\leq k\leq N,
\label{eq:discreteHankelTransforms}
\end{equation} 
where $\nu$ is an integer, $0<\omega_1<\cdots <\omega_n<\infty$, and 
$0 \leq r_1<\cdots < r_N\leq 1$. Special cases of~\eqref{eq:discreteHankelTransforms}
that are considered in this paper are: (1) The evaluation of Schl\"{o}milch expansions at $r_k$~\cite{Schlomilch_1846_01} 
($\omega_n = n\pi$ or $\omega_n = (n+\nu)\pi$); (2) 
The evaluation of Fourier--Bessel expansions of order $0$ 
at $r_k$ ($\nu=0$ and $\omega_n=j_{0,n}$, where $j_{0,n}$ is the $n$th positive root of $J_{0}$); and
(3) The discrete Hankel transform of order $0$~\cite{Johnson_87_01} ($\nu=0$, $\omega_n=j_{0,n}$, and $r_k = j_{0,k}/j_{0,N+1}$).
For algorithms that rely on the fast Fourier transform (FFT), such as the algorithm described in 
this paper, tasks (1), (2), and (3) are incrementally more difficult. 
We consider them in turn in sections~\ref{sec:Schlomilch},~\ref{sec:FourierBessel}, 
and~\ref{sec:DiscreteHankelTransform}, respectively. 

% General overview about asymptotics: 
Our algorithm is based on carefully replacing $J_\nu(z)$ by an asymptotic expansion for
large arguments that, up to a certain error, expresses $J_\nu(z)$ as a weighted sum of 
trigonometric functions. This allows a significant proportion of the computation involved 
in~\eqref{eq:discreteHankelTransforms} to be achieved by the discrete cosine transform of type I (DCT) 
and the discrete sine transform of type I (DST). These 
are $\mathcal{O}(N\log N)$ FFT-based algorithms for computing matrix-vector products with 
the matrices $C^{I}_N$ and $S^I_N$, where
\begin{equation}
 (C^I_N)_{kn} = \cos\left(\frac{(k-1)(n-1)\pi}{N-1}\right), \qquad (S^I_N)_{kn} = \sin\left(\frac{kn\pi}{N+1}\right), \qquad 1\leq k,n\leq N.
\label{eq:DCTs}
\end{equation} 

Asymptotic expansions of special functions are notoriously tricky to use in practice 
because it is often difficult to determine the precise regime for which they are accurate approximations. 
However, for transforms involving orthogonal polynomials, such as the Chebyshev--Legendre 
transform~\cite{Hale_14_01,Mori_99_01}, successful fast algorithms have been derived based 
on asymptotic expansions. At the point of evaluation, these algorithms are competitive 
with multipole-like approaches~\cite{Alpert_91_01}, but have the advantage of no precomputation. 
This allows for more adaptive algorithms such as a convolution algorithm for 
Chebyshev expansions~\cite{Hale_14_02}. 

In this paper we will use error bounds to determine the precise regime 
for which an asymptotic expansion of Bessel functions is an
accurate approximation and from there derive all the algorithmic 
parameters to achieve a near-optimal computational cost for any
accuracy goal (see Table~\ref{tab:algorithmicParameters}). For each task 
(1)-(3) there are just two user-defined inputs: A vector of coefficients $c_1,\ldots,c_N$ in~\eqref{eq:discreteHankelTransforms} 
and a working accuracy $\epsilon>0$. The values of $f_k$ 
in~\eqref{eq:discreteHankelTransforms} are then calculated to an accuracy of 
$\mathcal{O}(\epsilon \sum_{n=1}^N|c_n|)$. 
The resulting algorithm has a complexity of $\mathcal{O}(N(\log N)^2\log(1/\epsilon)^p/\log\!\log N)$, 
where $p=1$ for task (1), $p=2$ for task (2), and $p=3$ for task (3). 
For the majority of this paper we state algorithmic complexities without the 
dependency on $\epsilon$.

Previous algorithms based on asymptotic expansions 
for computing~\eqref{eq:discreteHankelTransforms}~\cite{Candel_81_01,Sharafeddin_92_01}  have either a 
poor accuracy or a dubious numerical stability~\cite{Cree_93_01}. 
Here, we find that once an asymptotic expansion of Bessel functions is carefully 
employed the resulting algorithm is numerically stable, can be adapted to 
any accuracy goal, and requires no precomputation. Moreover, we show how 
the equally-spaced restriction, which is inherited from the reliance on DCTs and DSTs, can be alleviated. 

We use the following notation. A column vector with 
entries $v_1,\ldots,v_N$ is denoted by $\underline{v}$, a row vector is denoted 
by $\underline{v}^\intercal$, a diagonal matrix 
with diagonal entries $v_1,\ldots,v_N$ is written as $D_{\underline{v}}$, and 
the $N\times N$ matrix with $(k,n)$ entry $J_\nu(a_{kn})$ is 
denoted by $\mathbf{J}_\nu(A)$ where $A=(a_{kn})_{1\leq k,n\leq N}$.

The paper is organized as follows. In Section~\ref{sec:existing} we briefly discuss 
existing methods for computing~\eqref{eq:discreteHankelTransforms} and in 
Section~\ref{sec:BesselFunctionProperties} we investigate the approximation power of 
three expansions for Bessel functions. In Section~\ref{sec:Schlomilch} we first describe an 
$\mathcal{O}(N^{3/2})$ algorithm for evaluating Schl\"{o}milch expansions before
deriving a faster $\mathcal{O}(N(\log N)^2/\log\!\log N)$ algorithm. 
In Sections~\ref{sec:FourierBessel} and~\ref{sec:DiscreteHankelTransform} 
we use the Neumann addition formula to extend the algorithm to the 
evaluation of Fourier--Bessel expansions and 
the computation of the discrete Hankel transform. 
%Finally, in Section~\ref{sec:FutureWork} 
%we detail some future work. 

\section{Existing methods}\label{sec:existing}
We now give a brief description of some existing 
methods for evaluating~\eqref{eq:discreteHankelTransforms} that roughly fall into four categories: 
(1) Direct summation; 
(2) Evaluation via an integral representation; 
(3) Evaluation using asymptotic expansions, and; (4) Butterfly schemes.  For
other approaches see~\cite{Cree_93_01}.

\subsection{Direct summation}
One of the simplest ideas is to evaluate $J_{\nu}(r_k\omega_n)$ for $1\leq k,n\leq N$ and 
then naively compute the sums in~\eqref{eq:discreteHankelTransforms}. 
By using the Taylor series expansion in~\eqref{eq:TaylorSeriesInfinite} 
for small $z$, the asymptotic 
expansion in~\eqref{eq:HankelExpansion} for large $z$, and Miller's algorithm~\cite[Sec.~3.6(vi)]{NISTHandbook} (backward 
recursion with a three-term recurrence) otherwise, 
each evaluation of $J_{\nu}(z)$ costs $\mathcal{O}(1)$ operations~\cite{Amos_86_01}.
Therefore, this simple algorithm requires a total of $\mathcal{O}(N^2)$ operations and is 
very efficient for small $N$ ($N\leq 256$). However, the $\mathcal{O}(N^2)$ 
cost is prohibitive for large $N$.

Note that in general there is no evaluation scheme for $J_{\nu}(z)$ that is exact 
in infinite precision arithmetic. At some level a working accuracy of $\epsilon>0$ 
must be introduced. Thus, even in the direct summation approach, the
relaxation of the working accuracy that is necessary for deriving a faster algorithm, 
has already been made.

The sums in~\eqref{eq:discreteHankelTransforms} are equivalent to the matrix-vector product 
$\mathbf{J}_\nu(\, \underline{r}\,\underline{\omega}^\intercal)\underline{c}$. One reason that an 
$\mathcal{O}(N(\log N)^2/\log\!\log N)$ algorithm is possible is because the argument, $\underline{r}\,\underline{\omega}^\intercal$, 
is a rank $1$ matrix. A fact that we will exploit several times. 

\subsection{Evaluation via an integral representation}
When $\nu$ is an integer, $J_\nu(z)$ can
be replaced by its integral representation~\cite[(10.9.2)]{NISTHandbook}, 
\begin{equation}
 J_\nu(z) = \frac{{\rm i}^{-\nu}}{\pi}\int_0^{\pi} e^{{\rm i}z\cos(\theta)}\cos( \nu\theta) d\theta = \frac{{\rm i}^{-\nu}}{2\pi}\int_0^{2\pi} e^{{\rm i}z\cos(\theta)}\cos( \nu\theta) d\theta, 
\label{eq:BesselIntegralRepresentation}
\end{equation}
where ${\rm i}=\sqrt{-1}$ is the imaginary unit and the last 
equality holds because $e^{{\rm i}z\cos(\theta)}\cos( \nu\theta)$ is even about $t=\pi$. 
The integrand in~\eqref{eq:BesselIntegralRepresentation} is $2\pi$-periodic and the integral 
can be approximated by a $K$-point trapezium rule.  
Substituting this approximation into~\eqref{eq:discreteHankelTransforms} gives a double 
sum for $\underline{f}$ that can be evaluated with one FFT followed
by an interpolation step in $\mathcal{O}(KN + N\log N)$ 
operations~\cite{Gopalan_83_01}.  The major issue with this approach 
is that the integrand in~\eqref{eq:BesselIntegralRepresentation}
is highly oscillatory for large $z$ and often $K\approx N$, resulting 
in $\mathcal{O}(N^2)$ operations.

\subsection{Evaluation using asymptotic expansions}
One can replace $J_\nu(z)$ in~\eqref{eq:discreteHankelTransforms}
by an asymptotic expansion that involves trigonometric 
functions. A fast, but perhaps numerically unstable  (see Figure~\ref{fig:partition}), 
algorithm then follows by exploiting DCTs and DSTs.  
Existing work has approximated $J_{\nu}(z)$ using asymptotic expansions 
with rational coefficients~\cite{Piessens_82_01}, the approximation 
$\sqrt{2/(\pi z)}\cos(z-(2\nu+1)\pi/4)$~\cite{Candel_81_01}, and for 
half-integer $\nu$ the asymptotic expansion 
in~\eqref{eq:HankelExpansion}~\cite{Sharafeddin_92_01}. However, none 
of these approaches have rigorously determined the regime in which 
the employed asymptotic expansion is an accurate approximation and instead 
involve dubiously chosen algorithmic parameters. In this paper we will use 
a well-established asymptotic expansion and rigorously determine the regime in which 
it is an accurate approximation. 

\subsection{A fast multipole method approach} 
In 1995, Kapur and Rokhlin described an algorithm for computing the integral 
in~\eqref{eq:HankelTransform} when $\nu = 0$~\cite{Kapur_95_01}. 
The main idea is to write~\eqref{eq:HankelTransform} as a product of 
two integrals as follows~\cite[(6)]{Kapur_95_01}:
\[
 \int_0^1 f(r) J_0(r\omega)r\,dr = \frac{1}{\pi}\int_{-\omega}^\omega \frac{1}{\sqrt{\omega^2-u^2}} \int_0^1 rf(r)\cos(ru) \, dr\,du.
\]
The inner integral can be discretized by a (corrected) trapnezium rule and 
computed via a discrete cosine transform, while the outer integral involves 
a singular kernel and can be computed with the fast multipole method. 
As described in~\cite{Kapur_95_01} the algorithm can be used to evaluate Schl\"{o}milch expansions
in $\mathcal{O}(N\log N)$ operations (see Section~\ref{sec:Schlomilch}). With modern 
advances in fast algorithms for nonuniform transforms, it could now be adapted
to the computation of the discrete Hankel transform though we are not aware of such 
work. 

\subsection{Butterfly schemes}
Recently, a new algorithm was developed for a class of fast transforms
involving special functions~\cite{ONeil_10_01}. The algorithm compresses the 
ranks of submatrices of a change of basis matrix by constructing 
skeleton decompositions
in a hierarchical fashion. This hierarchical decomposition of the matrix 
means that it can then be applied to a vector in $\mathcal{O}(N\log N)$ operations. 
The compression step of the algorithm is the dominating cost and 
is observed to require $\mathcal{O}(N^2)$ operations~\cite[Table.~6]{ONeil_10_01}. 
Fortunately, in many applications the compression step can be regarded as a 
precomputational cost because it is independent of the values of
$c_1,\ldots,c_N$ in~\eqref{eq:discreteHankelTransforms}, though it does 
depend on $N$.  

The methodology behind butterfly schemes is more general than what we present in this 
paper as it does not require knowledge of asymptotics. 
Here, we will derive a fast transform 
that has no precomputational cost and is better suited to applications where $N$
is not known in advance.

Some of the ideas in the 
spherical harmonic transform literature~\cite{Rokhlin_06_01,Tygert_08_01}
may be useful for reducing the precomputational cost in~\cite{ONeil_10_01}, though 
we are not aware of work in this direction. 

\section{Three expansions of Bessel functions}\label{sec:BesselFunctionProperties}
In this section we investigate three known expansions of Bessel functions~\cite[Chap.~10]{NISTHandbook}: (1) 
An explicit asymptotic expansion for large arguments; (2) A convergent Taylor series 
expansion for small arguments; and (3) The Neumann addition formula for perturbed arguments. 
Mathematically, these three expansions are infinite series, but for practical use they must 
be truncated. Here, we will focus on deriving criteria to ensure that 
the errors introduced by such truncations are negligible. Related techniques 
were successfully used to derive a fast algorithm for the discrete Legendre 
transform~\cite{Hale_15_01}. 

\subsection{An asymptotic expansion of Bessel functions for large arguments}\label{sec:asymptoticExpansion}
The graph of $J_\nu(z)$ on $[0,\infty)$ is often said to look like an oscillating 
trigonometric function that decays like $1/\sqrt{z}$. This observation can 
be made precise by an asymptotic expansion of $J_\nu(z)$ for large $z$. 

Let $\nu$ and $M\geq 1$ be integers and $z>0$. The first $2M$ terms of the Hankel
asymptotic expansion of $J_\nu(z)$ is given by~\cite[(10.17.3)]{NISTHandbook} (also see~\cite{Hankel_1869_01}):
\begin{equation}
J_\nu(z) = \left(\frac{2}{\pi z} \right)^{\!\!\frac{1}{2}}\left(\cos(\mu)\! \!\sum_{m = 0}^{M-1} (-1)^m\frac{a_{2m}(\nu)}{z^{2m}} - \sin(\mu)\!\!\sum_{m=0}^{M-1}(-1)^m\frac{a_{2m+1}(\nu)}{z^{2m+1}} \right) + R_{\nu,M}(z),
\label{eq:HankelExpansion}
\end{equation} 
where $\mu = z-(2\nu + 1)\pi/4$, $|R_{\nu,M}(z)|$ is the error term, $a_0(\nu) = 1$, and
\[
a_{m}(\nu) = \frac{(4\nu^2 - 1^2) ( 4\nu^2 - 3^2) \cdots (4\nu^2 - (2m-1)^2)}{m! 8^m}, \qquad m\geq 1.
\]

The first term in~\eqref{eq:HankelExpansion} shows that the leading asymptotic behavior of $J_\nu(z)$ 
as $z\rightarrow\infty$ is $\sqrt{2/(\pi z)} \cos(\mu)$, 
which is an oscillating trigonometric function that decays like $1/\sqrt{z}$. 
The first $2M$ terms show that, up to an error of $|R_{\nu,M}(z)|$, 
$J_\nu(z)$ can be written as a weighted sum of $\cos(\mu)z^{-2m-1/2}$ 
and $\sin(\mu)z^{-2m-3/2}$ for $0\leq m\leq M-1$.
Figure~\ref{fig:AsyDivergence} (left) shows the error term $|R_{0,M}(z)|$ on $z\in (0,50]$
for $M=1,2,3,4,5,7,10$. As expected, the asymptotic expansion becomes 
more accurate as $z$ increases. In particular, for sufficiently large $z$ 
the error term is negligible, i.e., $|R_{\nu,M}(z)|\leq \epsilon$. 

It is important to appreciate that~\eqref{eq:HankelExpansion} is an 
asymptotic expansion, as opposed to a convergent series, and 
does not converge pointwise to $J_\nu(z)$ as $M\rightarrow \infty$. 
In practice, increasing $M$ will eventually be detrimental and 
lead to severe numerical overflow issues.
Figure~\ref{fig:AsyDivergence} (right) shows the 
error $|R_{\nu,M}(10)|$ as $M\rightarrow \infty$ and $\nu=0,5,10,20$. 

Thus, the appropriate use of~\eqref{eq:HankelExpansion} is to 
select an $M\geq 1$ and ask: ``For what sufficiently large $z$ is the asymptotic 
expansion accurate?''. For example, from Figure~\ref{fig:AsyDivergence} (left) 
we observe that for $M=10$ we have $|R_{0,10}(z)|\leq 10^{-15}$ for any $z>17.8$ 
and hence, we may safely replace $J_0(z)$ by a weighted sum of $\cos(\mu)z^{-2m-1/2}$ 
and $\sin(\mu)z^{-2m-3/2}$ for $0\leq m\leq 9$ when $z>17.8$. 

\begin{figure} 
 \centering 
 \begin{minipage}{.49\textwidth} 
 \centering
  \begin{overpic}[width=\textwidth]{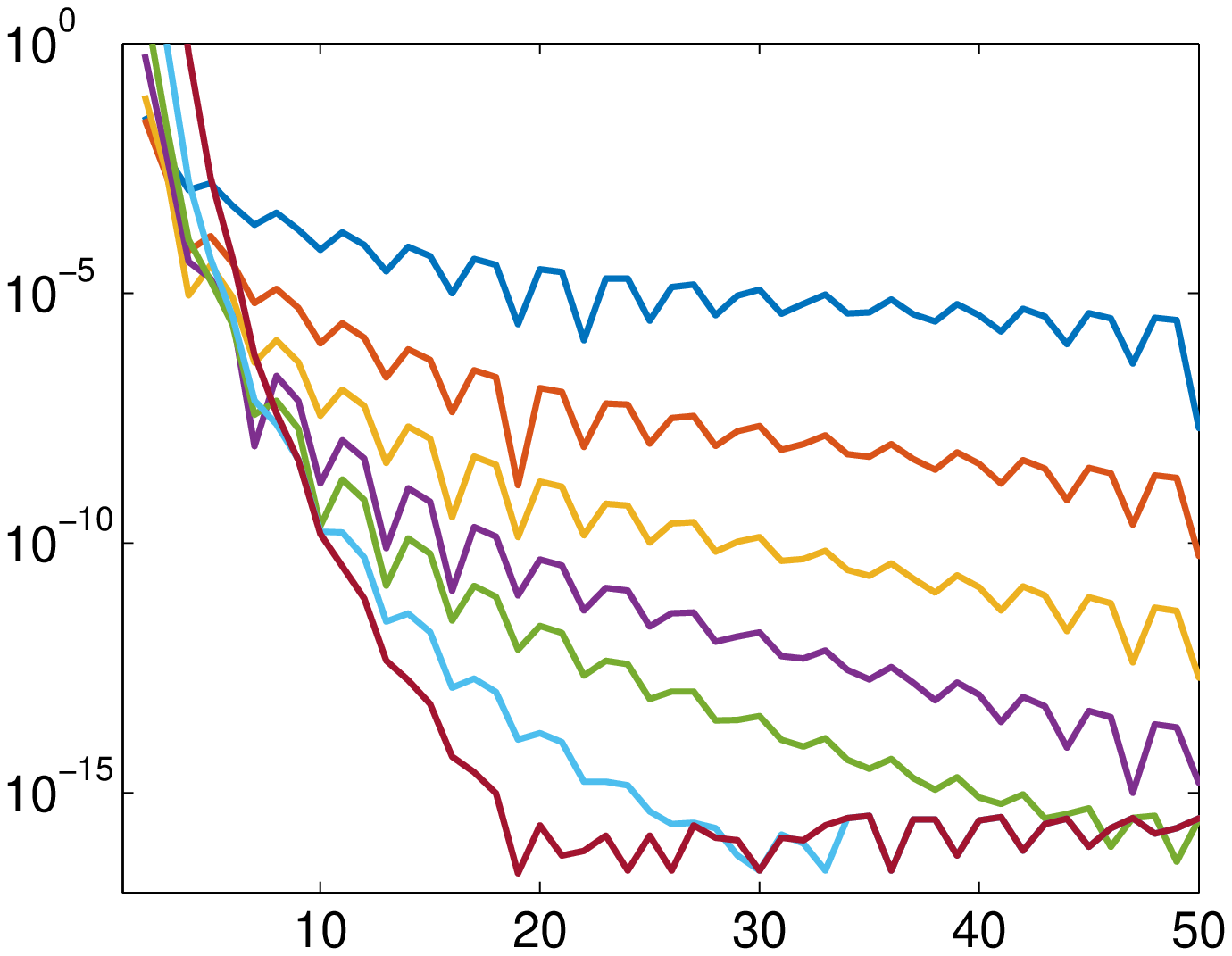}
  \put(50,0) {$z$}
 \put(-1.5,27) {\rotatebox{90}{$|R_{0,M}(z)|$}}
 \put(75,51.5) {\footnotesize{\rotatebox{-5}{$M=1$}}}
 \put(74,41) {\footnotesize{\rotatebox{-8}{$M=2$}}}
 \put(73,32) {\footnotesize{\rotatebox{-11}{$M=3$}}}
 \put(71,25.2) {\footnotesize{\rotatebox{-15}{$M=4$}}}
 \put(68,19) {\footnotesize{\rotatebox{-17}{$M=5$}}}
 \put(44,20) {\footnotesize{\rotatebox{-27}{$M=7$}}}
 \put(35.2,22.2) {\footnotesize{\rotatebox{-45}{$M=10$}}}
  \end{overpic}
 \end{minipage}
 \begin{minipage}{.49\textwidth} 
 \centering
  \begin{overpic}[width=\textwidth]{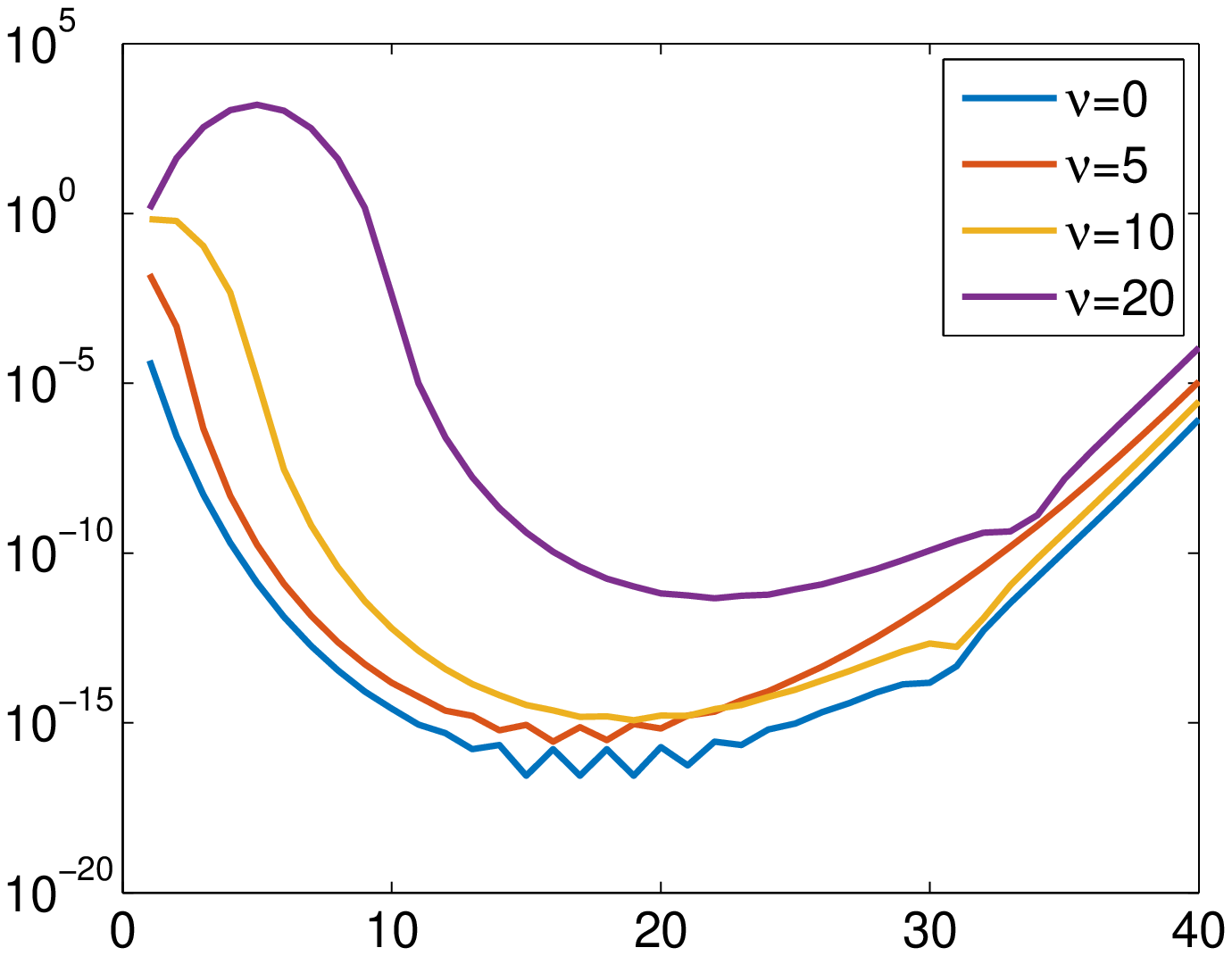}
  \put(50,0) {$M$}
  \put(-1.5,27) {\rotatebox{90}{$|R_{\nu,M}(10)|$}}
  \end{overpic}
 \end{minipage}
 \caption{Left: The absolute value of $R_{0,M}(z)$ in~\eqref{eq:HankelExpansion} 
 on $z\in(0,50]$ for $M=1,2,3,4,5,7,10$. For any $M\geq 1$ and sufficiently 
 large $z$, the asymptotic expansion is an accurate approximation. 
 Right: The absolute value of $R_{\nu,M}(10)$ for $1\leq M\leq 40$ and $\nu=0,5,10,20$.  
 For a fixed $z$, increasing $M$ may reduce the accuracy of the asymptotic expansion
 and will eventually lead to numerical overflow issues.}
 \label{fig:AsyDivergence}
\end{figure}

More generally, it is known that the error term $|R_{\nu,M}(z)|$ is bounded by the 
size of the first neglected terms~\cite[10.17(iii)]{NISTHandbook}. Since $|\cos(\omega)|\leq 1$ 
and $|\sin(\omega)|\leq 1$ we have
\begin{equation}
 | R_{\nu,M}(z) | \leq \left(\frac{2}{\pi z} \right)^{\!\!\frac{1}{2}}\left(\frac{|a_{2M}(\nu)|}{z^{2M}} + \frac{|a_{2M+1}(\nu)|}{z^{2M+1}} \right).
\label{eq:errorBound}
\end{equation}
This is essentially a sharp bound since for any $c>\sqrt{2}$
there is a $z>0$ such 
that $c|R_{\nu,M}(z)|$ is larger than the magnitude of the first neglected terms.\footnote{To see 
this pick $z$ of the form $(k/4+\nu/2+1/4)\pi$ for sufficiently large $k$.}

Let $s_{\nu,M}(\epsilon)$ be the smallest real number so that if
$z\geq s_{\nu,M}(\epsilon)$ then $| R_{\nu,M}(z) |\leq\epsilon$. 
From~\eqref{eq:errorBound} we can take $s_{\nu,M}(\epsilon)$ as the number 
that solves
\[
 \left(\frac{2}{\pi z} \right)^{\frac{1}{2}}\left(\frac{|a_{2M}(\nu)|}{z^{2M}} + \frac{|a_{2M+1}(\nu)|}{z^{2M+1}} \right) = \epsilon.
\]
In general, the equation above has no closed-form solution, but 
can be solved numerically by a 
fixed-point iteration. We start with the initial guess of 
$s_{\nu,M}^{(0)}(\epsilon) = 1$ and iterate as follows:
\[
 s_{\nu,M}^{(j+1)}(\epsilon)  = \left(\frac{\sqrt{2}\left(|a_{2M}(\nu)|+|a_{2M+1}(\nu)|/s_{\nu,M}^{(j)}(\epsilon) \right)}{\sqrt{\pi}\epsilon}\right)^{\frac{1}{2M+\frac{1}{2}}}, \qquad j\geq 0.
\]
We terminate this after four iterations and 
take $s_{\nu,M}(\epsilon)\approx s_{\nu,M}^{(4)}(\epsilon)$. Table~\ref{tab:sMTable}
gives the calculated values of $s_{\nu,M}(10^{-15})$ 
for $3\leq M\leq 12$ and $\nu = 0,1,2,10$. 

\begin{table} 
\centering
\begin{tabular}{cccccccccccc} 
\hline 
\rule{0pt}{3ex}$M$ \rule{0pt}{3ex}& 3& 4 & 5 & 6 & 7 & 8 & 9 & 10 & 11 & 12\\[2pt]
\hline
\rule{0pt}{3ex}$s_{0,M}$ \rule{0pt}{3ex}& 180.5 & 70.5 & 41.5 & 30.0 & 24.3 & 21.1 & 19.1 & 17.8 & 17.0 & 16.5 \\[2pt]
$s_{1,M}$ & 185.2 & 71.5 & 41.9 & 30.2 & 24.4 & 21.1 & 19.2 & 17.9 & 17.1 & 16.5 \\[2pt]
$s_{2,M}$ & 200.2 & 74.8 & 43.1 & 30.8 & 24.8 & 21.4 & 19.3 & 18.0 & 17.2 & 16.6 \\[2pt]
$s_{10,M}$ & 2330.7 & 500.0 & 149.0 & 64.6 & 41.4 & 31.4 & 26.0 & 22.9 & 20.9 & 19.6 \\[2pt]
\hline
\end{tabular}
\caption{The asymptotic expansion~\eqref{eq:HankelExpansion} is 
accurate up to an error of $\epsilon$ for any $z\geq s_{\nu,M}(\epsilon)$. 
Here, the values of $s_{\nu,M}(10^{-15})$ are tabulated for $3\leq M\leq 12$ 
and $\nu=0,1,2,10$.}
\label{tab:sMTable}
\end{table} 

\subsection{The Taylor series expansion of Bessel functions for small arguments}\label{sec:BesselTaylor}
For an integer $\nu$, the Taylor series 
expansion of $J_\nu(z)$ about $z=0$ is given by~\cite[(10.2.2)]{NISTHandbook} 
\begin{equation}
J_{\nu}(z) = \sum_{t=0}^{\infty} \frac{(-1)^t2^{-2t-\nu}}{t!(t+\nu)!}z^{2t+\nu}.
\label{eq:TaylorSeriesInfinite}
\end{equation}
In contrast to~\eqref{eq:HankelExpansion}, 
the Taylor series expansion converges pointwise to $J_\nu(z)$ for any $z$.
However, for practical application the infinite series in~\eqref{eq:TaylorSeriesInfinite} must still be 
truncated. 

Let $T\geq1$ be an integer and consider the truncated Taylor series expansion that is given by
\begin{equation} 
J_{\nu}(z) = \sum_{t=0}^{T-1} \frac{(-1)^t2^{-2t-\nu}}{t!(t+\nu)!}z^{2t+\nu} + E_{\nu,T}(z),
\label{eq:taylorSeries} 
\end{equation} 
where $|E_{\nu,T}(z)|$ is the error term. As $z\rightarrow 0$ the leading asymptotic 
behavior of $|E_{\nu,T}(z)|$ matches the order of the first neglected term and hence, 
$\lim_{z\rightarrow 0} |E_{\nu,T}(z)| = 0$. In particular, there is a 
real number $t_{\nu,T}(\epsilon)$ so that if $z\leq t_{\nu,T}(\epsilon)$ then
$|E_{\nu,T}(z)|\leq \epsilon$. 

To calculate the parameter 
$t_{\nu,T}(\epsilon)$ we solve the equation $E_{\nu,T}(z)=\epsilon$. Remarkably, 
an explicit closed-form expression for $E_{\nu,T}(z)$ is known~\cite{Wolfram_14_01}
and is given by
\[
 E_{\nu,T}(z) = \frac{(-1)^T 2^{-2T-\nu}z^{2T+\nu}}{(T+\nu)!T!}{}_1\!F_2\left(\begin{matrix}&1&\\T+1\!\!\!\!\!&
&\!\!\!\!\!T+\nu+1\end{matrix};-\frac{z^2}{4}\right)\approx \frac{(-1)^T 2^{-2T-\nu}z^{2T+\nu}}{(T+\nu)!T!},
\]
where ${}_1\!F_2$ is the generalized hypergeometric function that we 
approximate by $1$ since we are considering $z$ to be small~\cite[(16.2.1)]{NISTHandbook}. 
Solving $E_{\nu,T}(z)=\epsilon$ we find that
\[
 t_{\nu,T}(\epsilon) \approx \left(2^{2T+\nu}(T+\nu)!T!\epsilon\right)^{\frac{1}{2T+\nu}}.
\]
Table~\ref{tab:tTTable} gives the calculated values of $t_{\nu,T}(10^{-15})$ 
for $1\leq T\leq 9$ and $\nu = 0,1,2,10$. 
\begin{table} 
\centering
\begin{tabular}{ccccccccccc} 
\hline 
\rule{0pt}{3ex}$T$ \rule{0pt}{3ex}& 1 & 2 & 3& 4 & 5 & 6 & 7 & 8 & 9 \\[2pt]
\hline
\rule{0pt}{3ex}$t_{0,T}$ \rule{0pt}{3ex}&  $6\times 10^{-8}$ & 0.001 & 0.011 & 0.059 & 0.165 & 0.337 & 0.573 & 0.869 & 1.217 \\[2pt]
$t_{1,T}$ & $3\times 10^{-5}$ & 0.003 & 0.029 & 0.104 & 0.243 & 0.449 & 0.716 & 1.039 & 1.411 \\[2pt]
$t_{2,T}$ & 0.001 & 0.012 & 0.061 & 0.168 & 0.341 & 0.579 & 0.876 & 1.225 & 1.618 \\[2pt]
$t_{10,T}$ & 0.484 & 0.743 & 1.058 & 1.420 & 1.823 & 2.262 & 2.733 & 3.230 & 3.750 \\[2pt]
\hline
\end{tabular}
\caption{The truncated Taylor series expansion~\eqref{eq:taylorSeries} is 
accurate up to an error of $\epsilon$ for any $z\leq t_{\nu,T}(\epsilon)$. Here, 
the approximate values of $t_{\nu,T}(10^{-15})$ are tabulated for $1\leq T\leq 9$ and $\nu=0,1,2,10$.}
\label{tab:tTTable}
\end{table} 

\subsection{The Neumann addition formula for perturbed arguments}
The Neumann addition formula expresses $J_{\nu}(z+\delta z)$ as an 
infinite sum of products of Bessel functions of the form 
$J_{\nu-s}(z)J_s(\delta z)$ for $s\in\mathbb{Z}$. It is given by~\cite[(10.23.2)]{NISTHandbook}
\begin{equation}
 J_{\nu}(z + \delta z) = \sum_{s=-\infty}^\infty J_{\nu-s}(z)J_{s}(\delta z).
\label{eq:NeumannAdditionFormula}
\end{equation}
Here, we wish to truncate~\eqref{eq:NeumannAdditionFormula} and use it 
as a numerical approximation to $J_{\nu}(z+\delta z)$. Fortunately, 
when $|\delta z|$ is small (so that $z+\delta z$ can be considered as a 
perturbation of $z$)
the Neumann addition formula is a rapidly converging 
series for $J_\nu(z+\delta z)$. 
\begin{lemma} 
 Let $\nu$ be an integer, $z>0$, and $|\delta z|< e^{-1}$. Then, for $K\geq 1$ we have 
 \begin{equation}
  \left| J_{\nu}(z + \delta z) - \sum_{s=-K+1}^{K-1} J_{\nu-s}(z)J_{s}(\delta z) \right| \leq 5.2\left(\frac{e\,|\delta z|}{2}\right)^K,
 \label{eq:NeumannBound}
 \end{equation}
 where $e\approx 2.71828$ is Euler's number.
 \label{lem:NeumannBound}
\end{lemma}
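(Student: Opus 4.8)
The plan is to recognize the left-hand side of \eqref{eq:NeumannBound} as the tail of the Neumann addition formula \eqref{eq:NeumannAdditionFormula} and to bound it term-by-term. Subtracting the truncated sum (which runs over $|s|\le K-1$) from the full series leaves exactly $\sum_{|s|\ge K} J_{\nu-s}(z)J_s(\delta z)$, so by the triangle inequality the error is at most $\sum_{|s|\ge K}|J_{\nu-s}(z)|\,|J_s(\delta z)|$. Two ingredients then do the work: a uniform bound on the ``large-argument'' factor $J_{\nu-s}(z)$ and a rapidly decaying bound on the ``small-argument'' factor $J_s(\delta z)$.

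For the first factor I would use that $\nu-s$ is an integer and invoke the integral representation \eqref{eq:BesselIntegralRepresentation}, which immediately gives $|J_{\nu-s}(z)|\le 1$ for every real $z$. For the second factor I would use the symmetry $J_{-s}(\delta z)=(-1)^sJ_s(\delta z)$ to fold the negative indices onto the positive ones, reducing the tail to $2\sum_{s=K}^{\infty}|J_s(\delta z)|$, and then estimate each term from the Taylor series \eqref{eq:TaylorSeriesInfinite}. Taking absolute values inside the series and using $s!/(t+s)!\le 1$ yields
\[
 |J_s(\delta z)| \le \frac{(|\delta z|/2)^s}{s!}\sum_{t=0}^{\infty}\frac{(|\delta z|/2)^{2t}}{t!} = \frac{(|\delta z|/2)^s}{s!}\,e^{|\delta z|^2/4}, \qquad s\ge 0 .
\]
(Because $|\delta z|/2<1$ and $s\ge K\ge 1$, the series is in fact alternating with decreasing terms, so one may instead drop the exponential factor and use the cleaner $|J_s(\delta z)|\le (|\delta z|/2)^s/s!$.)

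It remains to sum the resulting factorial tail. Writing $w=|\delta z|/2$ and using $K!/(K+j)!\le 1/j!$, the factorial ratios give $\sum_{s=K}^{\infty} w^s/s! \le (w^K/K!)\,e^{w}$, and Stirling's inequality $K!\ge (K/e)^K$ together with $K\ge 1$ converts the leading term into the advertised shape, $w^K/K! \le (ew/K)^K \le (ew)^K = (e|\delta z|/2)^K$. Collecting the constants produces a bound of the form $2\,e^{|\delta z|^2/4}\,e^{w}\,(e|\delta z|/2)^K$, and the hypothesis $|\delta z|<e^{-1}$, which makes $w<e^{-1}/2$ and all the exponential prefactors close to $1$, bounds the leading constant comfortably below $5.2$.

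The two series manipulations are routine; the step that must be handled with care is the final one. The role of the hypothesis $|\delta z|<e^{-1}$ is precisely to force $e|\delta z|/2<1/2$, which is what makes $(e|\delta z|/2)^K$ genuinely decay in $K$ and keeps the auxiliary factors $e^{|\delta z|^2/4}$ and $e^{w}$ small enough that the accumulated constant stays under the stated $5.2$. Tracking this constant through the folding and the Stirling conversion, rather than any delicate Bessel-function estimate, is the main obstacle to get right.
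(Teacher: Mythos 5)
Your proof is correct, and it shares the paper's skeleton---identify the error as the two-sided tail $\sum_{|s|\ge K}J_{\nu-s}(z)J_s(\delta z)$, bound $|J_{\nu-s}(z)|\le 1$ (the paper simply cites~\cite[(10.14.1)]{NISTHandbook}; your integral-representation argument is a valid substitute since $\nu-s$ is an integer and $z$ is real), and fold negative indices onto $2\sum_{s\ge K}|J_s(\delta z)|$ via $J_{-s}=(-1)^sJ_s$---but it diverges at the key per-term estimate. The paper invokes Kapteyn's inequality~\cite[(10.14.8)]{NISTHandbook} together with $e^x\le e(1+x)/2$ to obtain $|J_s(\delta z)|\le (e|\delta z|/(2s))^s$, then bounds the tail by a product of sums, $2\left(\sum_{s\ge 1}s^{-s}\right)\left(\sum_{s\ge K}(e|\delta z|/2)^s\right)\le 2\cdot 1.3\cdot 2\,(e|\delta z|/2)^K$, which is exactly where the constant $5.2$ comes from. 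You instead derive $|J_s(\delta z)|\le e^{|\delta z|^2/4}(|\delta z|/2)^s/s!$ from the Taylor series and convert via Stirling's bound $s!\ge (s/e)^s$---which recovers essentially the same estimate $(e|\delta z|/(2s))^s$ by entirely elementary means, with no appeal to Kapteyn. Your summation step is also slightly tighter: with $w=|\delta z|/2<1/(2e)$, the bound $\sum_{s\ge K}w^s/s!\le (w^K/K!)\,e^w$ and $w^K/K!\le (ew/K)^K\le (ew)^K$ for $K\ge 1$ yield an accumulated constant $2\,e^{w^2}e^w\le 2\,e^{1/(4e^2)}e^{1/(2e)}\approx 2.5$, comfortably under the stated $5.2$. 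The delicate points you flag all check out: the alternating-series remark is legitimate since consecutive term ratios are $(|\delta z|/2)^2/((t+1)(t+s+1))<1$, and the rearrangement into the tail is justified by the absolute convergence your bounds establish. In short, your route buys elementarity and a sharper constant; the paper's buys brevity by outsourcing the decay in $s$ to a cited nontrivial inequality.
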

\begin{proof} 
Let $\nu$ be an integer, $z>0$, $|\delta z|< e^{-1}$, and $K\geq 1$.
 Denote the left-hand side of the inequality in~\eqref{eq:NeumannBound} by $|A_{\nu,K}(z,\delta z)|$ so that  
 \[
  A_{\nu,K}(z,\delta z) = \sum_{s=-\infty}^{-K} J_{\nu-s}(z)J_{s}(\delta z) + \sum_{s=K}^\infty J_{\nu-s}(z)J_{s}(\delta z).
 \]
 Since $|J_\nu(z)|\leq 1$~\cite[(10.14.1)]{NISTHandbook} and $J_{-\nu}(z) = (-1)^\nu J_{\nu}(z)$~\cite[(10.4.1)]{NISTHandbook}, we have 
\begin{equation}
 \left|A_{\nu,K}(z,\delta z)\right| \leq \sum_{s=-\infty}^{-K} \left|J_{s}(\delta z)\right| + \sum_{s=K}^\infty \left|J_{s}(\delta z)\right|\leq 2\sum_{s=K}^\infty\left|J_{s}(\delta z)\right|.
\label{eq:Abound}
\end{equation}
By Kapteyn's inequality~\cite[(10.14.8)]{NISTHandbook} we can bound $|J_s(\delta z)|$ from above as follows: 
\begin{equation}
 |J_{s}(\delta z)| = |J_{s}(s(\delta z/s))|\leq \frac{\left|\delta z/s\right|^se^{s(1-\left|\delta z/s\right|^2)^{\frac{1}{2}}}}{\left(1+(1-\left|\delta z/s\right|^2)^{\frac{1}{2}}\right)^s}\leq \left(\frac{e\,|\delta z|}{2s}\right)^s ,
\label{eq:Jbound}
\end{equation} 
where the last inequality comes from $e^x\leq e(1+x)/2$ for $0\leq x\leq 1$. Therefore, by substituting~\eqref{eq:Jbound} into~\eqref{eq:Abound} we find that
\[
 \left|A_{\nu,K}(z,\delta z)\right| \leq 2\sum_{s=K}^\infty \left(\frac{e\,|\delta z|}{2s}\right)^s \leq 2\sum_{s=K}^\infty s^{-s} \sum_{s=K}^\infty \left(\frac{e\,|\delta z|}{2}\right)^s \leq 5.2\left(\frac{e\,|\delta z|}{2}\right)^K, 
\]
where we used $\sum_{s=1}^\infty s^{-s}\leq 1.3$ and $\sum_{s=0}^\infty (e|\delta z|/2)^s = 1/(1-e|\delta z|/2)\leq 2$. 
\end{proof}

Lemma~\ref{lem:NeumannBound} shows that $\sum_{s=-K+1}^{K-1} J_{\nu-s}(z)J_s(\delta z)$
approximates $J_\nu(z + \delta z)$, up to an error of $\epsilon$, provided that
$5.2(e|\delta z|/2)^K\leq \epsilon$. Equivalently, for any 
\begin{equation}
 |\delta z| \leq \frac{2}{e} \left(\frac{\epsilon}{5.2} \right)^{\frac{1}{K}}.
\label{eq:NeumannInequality}
\end{equation}
Table~\ref{tab:NeumannTable} gives the values of $2(\epsilon/5.2)^{1/K}/e$ for 
$\epsilon = 10^{-15}$.  
\begin{table} 
\centering
\begin{tabular}{ccccccccccc} 
\hline 
\rule{0pt}{3ex}$K$ \rule{0pt}{3ex}& 3& 4 & 5 & 6 & 7 & 8 & 9 & 10 \\[2pt]
\hline
\rule{0pt}{3ex}\eqref{eq:NeumannInequality}\rule{0pt}{3ex}& $4\times 10^{-6}$ & $9\times10^{-5}$ & 0.001 & 0.002 & 0.004 & 0.008 & 0.013 & 0.020\\[2pt]
\hline
\end{tabular}
\caption{Lemma~\ref{lem:NeumannBound} shows that $\sum_{s=-K+1}^{K-1}J_{\nu-s}(z)J_s(\delta z)$ is 
an approximation to $J_{\nu}(z+\delta z)$, up to an error of $\epsilon$, 
for any $|\delta z|\leq 2(\epsilon/5.2)^{1/K}/e$. Here, the values of 
$2(\epsilon/5.2)^{1/K}/e$ are tabulated for $\epsilon=10^{-15}$ and $3\leq K\leq 10$.}
\label{tab:NeumannTable}
\end{table} 

At first it may not be clear why Lemma~\ref{lem:NeumannBound} is useful 
for practical computations since a single evaluation of 
$J_{\nu}(z + \delta z)$ is replaced by a seemingly trickier sum of 
products of Bessel functions. However, in sections~\ref{sec:FourierBessel} 
and~\ref{sec:DiscreteHankelTransform} the truncated 
Neumann addition formula will be the approximation that will allow 
us to evaluate Fourier--Bessel expansions and compute the discrete Hankel transform.
The crucial observation is that the positive roots of $J_0(z)$ can be regarded as 
a perturbed equally-spaced grid. 

\subsubsection{The roots of $\mathbf{J_0(z)}$ as a perturbed equally-spaced grid}\label{sec:BesselRootsPerturbed}
Let $j_{0,n}$ denote the $n$th positive root of $J_0(z)$. 
We know from~\eqref{eq:HankelExpansion} that the leading asymptotic behavior 
of $J_0(z)$ is $\sqrt{2/(\pi z)}\cos(z-\pi/4)$ for large $z$. 
Therefore, since $\cos((n-1/2)\pi)=0$ for $n\geq 1$ the leading asymptotic behavior 
of $j_{0,n}$ is $(n-1/4)\pi$ for large $n$. 
Similarly, the leading asymptotic behavior of 
$j_{0,k}/j_{0,N+1}$ is $(k-1/4)/(N+3/4)$ for large $k$ and $N$.
The next lemma shows that $j_{0,1},\ldots,j_{0,N}$ and the ratios 
$j_{0,1}/j_{0,N+1},\ldots,j_{0,N}/j_{0,N+1}$ can be regarded as perturbed
equally-spaced grids.

\begin{lemma} 
Let $j_{0,n}$ denote the $n$th positive root of $J_0(z)$. Then, 
 \[
  j_{0,n} = \left(n-\frac{1}{4}\right)\pi + b_n, \qquad 0\leq b_n \leq \frac{1}{8(n-\frac{1}{4})\pi},
 \]
 and 
 \[
  \frac{j_{0,k}}{j_{0,N+1}} = \frac{k-\frac{1}{4}}{N+\frac{3}{4}} + e_{k,N}, \qquad |e_{k,N}|\leq \frac{1}{8(N+\frac{3}{4})(k-\frac{1}{4})\pi^2}. 
 \]
 \label{lem:BesselInequalities}
\end{lemma}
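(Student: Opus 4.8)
The statement splits into two claims, and the bound on $e_{k,N}$ follows from the enclosure of the individual zeros $j_{0,n}$ by elementary estimates, so the plan is to isolate the enclosure of $j_{0,n}$ as the real content and then pass to the ratio by hand. For the first claim I would work with the Bessel phase function rather than with $J_0$ directly. Writing $J_0(z) = M_0(z)\cos\theta_0(z)$ and $Y_0(z) = M_0(z)\sin\theta_0(z)$ with $M_0 = (J_0^2 + Y_0^2)^{1/2} > 0$, the phase $\theta_0$ is smooth and strictly increasing on $(0,\infty)$ (its derivative equals $2/(\pi z M_0(z)^2) > 0$) and is normalized so that $\theta_0(z) = z - \pi/4 + o(1)$ as $z\to\infty$. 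The zeros of $J_0$ are exactly the points where $\cos\theta_0 = 0$, and with this normalization $\theta_0(j_{0,n}) = (n-\tfrac12)\pi$. Since $\theta_0$ is increasing, placing $j_{0,n}$ in an interval $[z_-,z_+]$ is \emph{equivalent} to the sandwich $\theta_0(z_-) \le (n-\tfrac12)\pi \le \theta_0(z_+)$.

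Taking $z_- = (n-\tfrac14)\pi$ and $z_+ = (n-\tfrac14)\pi + \tfrac{1}{8(n-1/4)\pi}$, I would reduce the whole first claim to the two-sided phase inequality
\[
z - \frac{\pi}{4} - \frac{1}{8z} \le \theta_0(z) \le z - \frac{\pi}{4}, \qquad z \ge \tfrac{3}{4}\pi .
\]
Granting this, the lower bound $b_n\ge 0$ is immediate: $\theta_0(z_-) \le z_- - \pi/4 = (n-\tfrac12)\pi = \theta_0(j_{0,n})$ forces $z_- \le j_{0,n}$. The upper bound $b_n \le \tfrac{1}{8z_-}$ follows because the left inequality gives $\theta_0(z_+) \ge z_+ - \pi/4 - \tfrac{1}{8z_+}$, and since $z_+ > z_-$ implies $\tfrac{1}{8z_+} < \tfrac{1}{8z_-}$, we get $z_+ - \tfrac{1}{8z_+} > z_-$, hence $\theta_0(z_+) > z_- - \pi/4 = (n-\tfrac12)\pi = \theta_0(j_{0,n})$, which forces $j_{0,n} < z_+$.

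The main obstacle is establishing the displayed phase inequality \emph{rigorously and uniformly in $z$} (hence in all $n\ge 1$): the difference $\theta_0(z)-(z-\pi/4)$ has the asymptotic expansion $-\tfrac{1}{8z}+O(z^{-3})$, so what is needed is that its remainder keeps a definite sign and is dominated by the first omitted term on all of $[\tfrac34\pi,\infty)$, i.e. that $-\tfrac{1}{8z}\le \theta_0(z)-(z-\pi/4)\le 0$ there. This is precisely the ``is the asymptotic remainder controlled by the first neglected term?'' question already faced for the Hankel expansion in~\eqref{eq:errorBound}. I would either invoke the corresponding sign/error bound for the phase function from the standard references, or, failing a ready citation, verify the inequality for the finitely many small $n$ directly from tabulated zeros and cover large $z$ with the error bound coming from~\eqref{eq:HankelExpansion}--\eqref{eq:errorBound}. (A naive alternative, applying the intermediate value theorem to $J_0$ on $[z_-,z_+]$ via~\eqref{eq:HankelExpansion}, is awkward: near $z_+$ the leading trigonometric terms nearly cancel, so $M=1$ does not resolve the sign and one must track higher-order terms. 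The phase formulation avoids this cancellation entirely.)

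For the second claim the work is routine. Write $A=(k-\tfrac14)\pi$ and $B=(N+\tfrac34)\pi$; the first claim gives $j_{0,k}=A+a$ and $j_{0,N+1}=B+b$ with $0\le a\le \tfrac{1}{8A}$ and $0\le b\le \tfrac{1}{8B}$. Then
\[
e_{k,N} = \frac{A+a}{B+b} - \frac{A}{B} = \frac{Ba - Ab}{B(B+b)} .
\]
Since $k\le N$ we have $A<B$. If $e_{k,N}\ge 0$, discard the $-Ab$ term to get $e_{k,N}\le \tfrac{a}{B+b}\le \tfrac{a}{B}\le \tfrac{1}{8AB}$. If $e_{k,N}<0$, discard the $Ba$ term to get $-e_{k,N}\le \tfrac{Ab}{B^2}\le \tfrac{A}{8B^3}\le \tfrac{1}{8AB}$, the last step using $A\le B$. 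Hence $|e_{k,N}|\le \tfrac{1}{8AB} = \tfrac{1}{8(N+3/4)(k-1/4)\pi^2}$, which is the claimed bound; the only inputs are the first claim and $A<B$.
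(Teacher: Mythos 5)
Your proposal is correct, but it routes the first claim differently from the paper. The paper does not prove the enclosure of $j_{0,n}$ at all: it cites it directly from Hethcote's Theorem~3 \cite{Hethcote_70_01}, and spends its entire proof on the ratio bound, which it obtains from the identity
\[
e_{k,N} = \frac{b_k(N+\tfrac{3}{4})-b_{N+1}(k-\tfrac{1}{4})}{((N+\tfrac{3}{4})\pi+b_{N+1})(N+\tfrac{3}{4})},
\]
followed by $b_{N+1}\geq 0$ and a bound on the numerator using $k-\tfrac14\leq N+\tfrac34$. Your two-case estimate on $(Ba-Ab)/(B(B+b))$ is the same algebra in slightly different clothing, and it is correct. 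For the first claim, your phase-function reduction is sound: $\theta_0'(z)=2/(\pi z M_0(z)^2)>0$, $\theta_0(j_{0,n})=(n-\tfrac12)\pi$ with the standard normalization, and the sandwich argument correctly converts the two-sided phase inequality $z-\tfrac{\pi}{4}-\tfrac{1}{8z}\leq\theta_0(z)\leq z-\tfrac{\pi}{4}$ into exactly $0\leq b_n\leq 1/(8(n-\tfrac14)\pi)$. What this buys you is conceptual transparency (it explains why the bound is one-sided and asymptotically sharp, since $\theta_0(z)-(z-\tfrac{\pi}{4})=-\tfrac{1}{8z}+\tfrac{25}{384z^3}-\cdots$), but it does not eliminate the appeal to the literature: the phase inequality is precisely the content of the cited zero bound, so in the end you and the paper lean on the same external fact.

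One caveat on your proposed fallback: deducing the phase inequality from \eqref{eq:HankelExpansion}--\eqref{eq:errorBound} is harder than you suggest, because \eqref{eq:errorBound} controls only the \emph{magnitude} $|R_{\nu,M}(z)|$, while your inequality requires the remainder of the phase expansion to have a definite sign and to be dominated by the first neglected term uniformly down to $z=\tfrac34\pi$ --- and the lower bound is asymptotically attained, so there is vanishing slack ($\sim 25/(384z^3)$) to absorb a magnitude-only error estimate. Sign-definite remainder bounds of this kind do exist for the Bessel phase (and are what Hethcote-type arguments exploit), but they are a separate known result, not a corollary of \eqref{eq:errorBound}; the clean resolution is the citation, which is exactly the paper's choice.
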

\begin{proof} 
 The bounds on $b_n$ are given in~\cite[Thm.~3]{Hethcote_70_01}. For the 
 bound on $|e_{k,N}|$ we have (since $j_{0,k} = (k-1/4)\pi+b_k$ and $j_{0,N+1} = (N+3/4)\pi+b_{N+1}$)
 \[
  |e_{k,N}| = \left|\frac{j_{0,k}}{j_{0,N+1}} - \frac{(k-\frac{1}{4})}{(N+\frac{3}{4})}\right| = \left|\frac{b_k(N+\frac{3}{4})-b_{N+1}(k-\frac{1}{4})}{((N+\frac{3}{4})\pi+b_{N+1})(N+\frac{3}{4})}\right|.
 \]
The result follows from $b_{N+1}\geq 0$ and $|b_k(N+3/4)-b_{N+1}(k-1/4)|\leq (N+3/4)/(8(k-1/4)\pi)$.
\end{proof}

Lemma~\ref{lem:BesselInequalities} shows that
we can consider $j_{0,1},\ldots,j_{0,N}$ as a small perturbation of $3\pi/4,7\pi/4,\ldots,(N-1/4)\pi$ 
and that we can consider $j_{0,1}/j_{0,N+1},\ldots,j_{0,N}/j_{0,N+1}$ as a small perturbation 
of $3/(4N+3),7/(4N+3),\ldots,(4N-1)/(4N+3)$. This is an important observation for 
sections~\ref{sec:FourierBessel} and~\ref{sec:DiscreteHankelTransform}.  

\section{Fast evaluation schemes for Schl\"{o}milch expansions}\label{sec:Schlomilch}
A Schl\"{o}milch expansion of a function $f:[0,1]\rightarrow \mathbb{C}$ takes 
the form~\cite{Schlomilch_1846_01}:
\begin{equation}
 f(r) = \sum_{n=1}^N c_n J_\nu(n\pi r). 
\label{eq:Schlomilch}
\end{equation}
Here, we take $\nu\geq 0$ to be an integer and develop a fast $\mathcal{O}(N(\log N)^2/\log\!\log N)$
algorithm for evaluating Schl\"{o}milch expansions
at $r_1,\ldots,r_N$, where $r_k = k/N$. The evaluation of~\eqref{eq:Schlomilch} 
at $r_1,\ldots,r_N$ is equivalent to computing the matrix-vector 
product $\underline{f} = \mathbf{J}_\nu(\, \underline{r}\,\underline{\omega}^\intercal )\underline{c}$, i.e.,
\begin{equation}
 \begin{pmatrix} 
   f_1\\\vdots\\f_{N}
 \end{pmatrix} = 
\underbrace{
 \begin{pmatrix} 
  J_{\nu}( r_1\omega_1  ) & \cdots & J_{\nu}( r_1\omega_N ) \\
  \vdots & \ddots & \vdots \\ 
  J_{\nu}( r_N\omega_1) & \cdots & J_{\nu}( r_N\omega_N ) 
 \end{pmatrix}}_{ \mathbf{J}_\nu(\, \underline{r}\,\underline{\omega}^\intercal )}
  \begin{pmatrix} 
   c_1\\ \vdots \\ c_N
 \end{pmatrix},
\label{eq:SchlomilchMatrix}
\end{equation}
where $f_k = f(r_k)$ and $\omega_n = n\pi$. 
The $(k,n)$ entry of $\mathbf{J}_\nu(\, \underline{r}\,\underline{\omega}^\intercal )$
is $J_{\nu}(kn\pi/N)$, which takes a similar form to the $(k+1,n+1)$ entry 
of $C^I_{N+1}$, $\cos(kn\pi/N)$, and the $(k,n)$ entry of $S^I_{N-1}$, $\sin(kn\pi/N)$, see~\eqref{eq:DCTs}. 
We will carefully use the asymptotic expansion in~\eqref{eq:HankelExpansion} 
to approximate $\mathbf{J}_\nu(\, \underline{r}\,\underline{\omega}^\intercal )$
by a weighted sum of $C^I_{N+1}$ and $S^I_{N-1}$.

\subsection{A simple and fast evaluation scheme for Schl\"{o}milch expansions}\label{sec:singlePartition}
To derive a fast matrix-vector product for 
$\mathbf{J}_\nu(\, \underline{r}\,\underline{\omega}^\intercal )$,
we first substitute $z = r_k\omega_n$ into the asymptotic expansion~\eqref{eq:HankelExpansion}. 
By the trigonometric addition formula we have
\[
 \cos\!\left( r_k\omega_n -(2\nu + 1)\frac{\pi}{4} \right) = \cos\!\left(\frac{kn\pi}{N}\right)\cos\!\left((2\nu+1)\frac{\pi}{4}\right) + \sin\!\left(\frac{kn\pi}{N}\right)\sin\!\left((2\nu+1)\frac{\pi}{4}\right).
\]
As a matrix decomposition we can write this as the following sum of $C_{N+1}^I$ and $S_{N-1}^I$:
\begin{equation}
 \cos\left( \, \underline{r}\,\underline{\omega}^\intercal -(2\nu + 1)\frac{\pi}{4} \right) = d_1Q^\intercal C_{N+1}^{I}Q + d_2\begin{bmatrix} S_{N-1}^{I} &\!\!\!\! \underline{0}\\ \underline{0}^\intercal &\!\!\!\! 0\end{bmatrix},
\label{eq:cos}
\end{equation} 
where $\underline{0}$ is a zero column vector, 
$Q = \left[\,\underline{0}\,\,\,I_{N}\right]^\intercal$, $d_1 = \cos((2\nu+1)\pi/4)$, and $d_2 = \sin((2\nu+1)\pi/4)$. Similarly, for sine we have
\begin{equation}
 \sin\left( \, \underline{r}\,\underline{\omega}^\intercal -(2\nu + 1)\frac{\pi}{4} \right) = - d_2 Q^\intercal C_{N+1}^{I}Q  +  d_1\begin{bmatrix} S_{N-1}^{I} &\!\!\!\! \underline{0}\\ \underline{0}^\intercal &\!\!\!\! 0\end{bmatrix}.
\label{eq:sin}
\end{equation}

Thus, by substituting~\eqref{eq:cos} and~\eqref{eq:sin} 
into~\eqref{eq:HankelExpansion} with $z = \underline{r}\,\underline{\omega}^\intercal$ we obtain a
matrix decomposition for $\mathbf{J}_\nu(\, \underline{r}\,\underline{\omega}^\intercal )$,
\begin{equation}
\begin{aligned}
 \mathbf{J}_\nu(\, \underline{r}&\,\underline{\omega}^\intercal ) = \sum_{m=0}^{M-1}\frac{(-1)^ma_{2m}(\nu)}{\sqrt{\pi/2}}D_{\underline{r}}^{-2m-\frac{1}{2}}\!\left( d_1Q^\intercal C_{N+1}^{I}Q + d_2\! \begin{bmatrix} S_{N-1}^{I} &\!\!\!\! \underline{0}\\ \underline{0}^\intercal &\!\!\!\! 0\end{bmatrix} \right)D_{\underline{\omega}}^{-2m-\frac{1}{2}}\! \\   
    &- \sum_{m=0}^{M-1}\frac{(-1)^ma_{2m+1}(\nu)}{\sqrt{\pi/2}} D_{\underline{r}}^{-2m-\frac{3}{2}}\!\left(-d_2Q^\intercal C_{N+1}^{I}Q+d_1\!\begin{bmatrix} S_{N-1}^{I} &\!\!\!\! \underline{0}\\ \underline{0}^\intercal &\!\!\!\! 0\end{bmatrix}\right)D_{\underline{\omega}}^{-2m-\frac{3}{2}}\! \\ 
    & \hspace{9cm}+ \mathbf{R}_{\nu,M}(\underline{r}\,\underline{\omega}^\intercal), 
\end{aligned}
\label{eq:AsymptoticDecomposition}
\end{equation}
where we used the fact that $\underline{r}\,\underline{\omega}^\intercal$ is a rank $1$ matrix.
We can write~\eqref{eq:AsymptoticDecomposition} more conveniently as 
\begin{equation}
 \mathbf{J}_\nu(\, \underline{r}\,\underline{\omega}^\intercal ) = \mathbf{J}_{\nu,M}^{\rm ASY}(\, \underline{r}\,\underline{\omega}^\intercal )  + \mathbf{R}_{\nu,M}(\, \underline{r}\,\underline{\omega}^\intercal ).
\label{eq:asyR}
\end{equation}

One could now compute $\mathbf{J}_{\nu,M}(\, \underline{r}\,\underline{\omega}^\intercal )\underline{c}$ by first computing the vector $\mathbf{J}_{\nu,M}^{\rm ASY}(\, \underline{r}\,\underline{\omega}^\intercal )\underline{c}$
and then correcting it by $\mathbf{R}_{\nu,M}(\, \underline{r}\,\underline{\omega}^\intercal )\underline{c}$. 
The matrix-vector product $\mathbf{J}_{\nu,M}^{\rm ASY}(\, \underline{r}\,\underline{\omega}^\intercal )\underline{c}$ can be computed in $\mathcal{O}(N\log N)$ operations
since the expression in~\eqref{eq:AsymptoticDecomposition} decomposes $\mathbf{J}_{\nu,M}^{\rm ASY}(\, \underline{r}\,\underline{\omega}^\intercal )$ as a weighted
sum of diagonal matrices and $2M$ DCTs and DSTs.

While 
this does lead to an $\mathcal{O}(N\log N)$ evaluation scheme for Schl\"{o}milch expansions, it is numerically unstable 
because the asymptotic expansion is employed for entries $J_\nu(r_k\omega_n)$ for which 
$r_k\omega_n$ is small (see Section~\ref{sec:asymptoticExpansion}). Numerical overflow issues and severe cancellation errors plague this approach. Figure~\ref{fig:partition} illustrates 
the entries that cause the most numerical problems. 
We must be more careful
and only employ the asymptotic expansion for the $(k,n)$ 
entry of $\mathbf{J}_{\nu,M}(\, \underline{r}\,\underline{\omega}^\intercal)$ if
$|R_{\nu,M}(r_k\omega_n)|\leq \epsilon$.

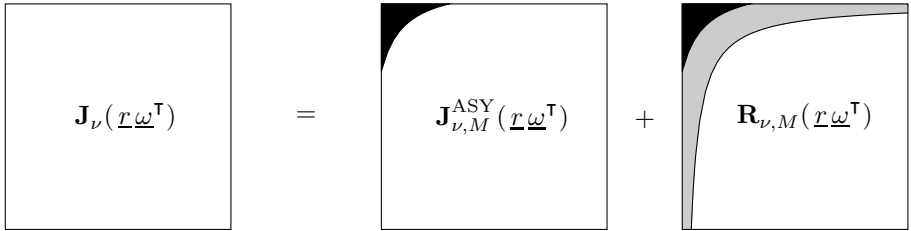
\begin{figure} 
 \centering
 \begin{tikzpicture}
%\draw[fill,black!20] (0,0) -- (3,0) -- (3,3) -- (0,3) -- (0,0);
\draw (0,0) -- (3,0) -- (3,3) -- (0,3) -- (0,0);
\draw (5,0) -- (8,0) -- (8,3) -- (5,3) -- (5,0);
\node[anchor=west] at (.8,1.5) (l1) {$\mathbf{J}_\nu(\, \underline{r}\,\underline{\omega}^\intercal )$};
\node[anchor=west] at (5.6,1.5) (l1) {$\mathbf{J}_{\nu,M}^{\rm ASY}(\, \underline{r}\,\underline{\omega}^\intercal )$};
\node[anchor=west] at (9.6,1.5) (l2) {$\mathbf{R}_{\nu,M}(\, \underline{r}\,\underline{\omega}^\intercal )$};
\node[anchor=north] at (8.5,1.7) (l2) {$+$};
\node[anchor=north] at (4,1.7) (l2) {$=$};
\draw[fill,black!20] (9,0)--(0.200*3/5+9,0.000*3/5)--(0.216*3/5+9,0.366*3/5)--(0.221*3/5+9,0.476*3/5)--(0.226*3/5+9,0.581*3/5)--(0.232*3/5+9,0.682*3/5)--(0.237*3/5+9,0.778*3/5)--(0.242*3/5+9,0.870*3/5)--(0.247*3/5+9,0.957*3/5)--(0.253*3/5+9,1.042*3/5)--(0.258*3/5+9,1.122*3/5)--(0.263*3/5+9,1.200*3/5)--(0.268*3/5+9,1.275*3/5)--(0.274*3/5+9,1.346*3/5)--(0.279*3/5+9,1.415*3/5)--(0.284*3/5+9,1.481*3/5)--(0.289*3/5+9,1.545*3/5)--(0.295*3/5+9,1.607*3/5)--(0.300*3/5+9,1.667*3/5) --(0.385*3/5+9,2.400*3/5)--(0.513*3/5+9,3.050*3/5)--(0.641*3/5+9,3.440*3/5)--(0.769*3/5+9,3.700*3/5)--(0.897*3/5+9,3.886*3/5)--(1.026*3/5+9,4.025*3/5)--(1.154*3/5+9,4.133*3/5)--(1.282*3/5+9,4.220*3/5)--(1.410*3/5+9,4.291*3/5)--(1.538*3/5+9,4.350*3/5)--(1.667*3/5+9,4.400*3/5)--(1.795*3/5+9,4.443*3/5)--(1.923*3/5+9,4.480*3/5)--(2.051*3/5+9,4.513*3/5)--(2.179*3/5+9,4.541*3/5)--(2.308*3/5+9,4.567*3/5)--(2.436*3/5+9,4.589*3/5)--(2.564*3/5+9,4.610*3/5)--(2.692*3/5+9,4.629*3/5)--(2.821*3/5+9,4.645*3/5)--(2.949*3/5+9,4.661*3/5)--(3.077*3/5+9,4.675*3/5)--(3.205*3/5+9,4.688*3/5)--(3.333*3/5+9,4.700*3/5)--(3.462*3/5+9,4.711*3/5)--(3.590*3/5+9,4.721*3/5)--(3.718*3/5+9,4.731*3/5)--(3.846*3/5+9,4.740*3/5)--(3.974*3/5+9,4.748*3/5)--(4.103*3/5+9,4.756*3/5)--(4.231*3/5+9,4.764*3/5)--(4.359*3/5+9,4.771*3/5)--(4.487*3/5+9,4.777*3/5)--(4.615*3/5+9,4.783*3/5)--(4.744*3/5+9,4.789*3/5)--(4.872*3/5+9,4.795*3/5)--(5.000*3/5+9,4.800*3/5)--(12,3)--(9,3)--(9,0);
\draw (0.200*3/5+9,0.000*3/5)--(0.216*3/5+9,0.366*3/5)--(0.221*3/5+9,0.476*3/5)--(0.226*3/5+9,0.581*3/5)--(0.232*3/5+9,0.682*3/5)--(0.237*3/5+9,0.778*3/5)--(0.242*3/5+9,0.870*3/5)--(0.247*3/5+9,0.957*3/5)--(0.253*3/5+9,1.042*3/5)--(0.258*3/5+9,1.122*3/5)--(0.263*3/5+9,1.200*3/5)--(0.268*3/5+9,1.275*3/5)--(0.274*3/5+9,1.346*3/5)--(0.279*3/5+9,1.415*3/5)--(0.284*3/5+9,1.481*3/5)--(0.289*3/5+9,1.545*3/5)--(0.295*3/5+9,1.607*3/5)--(0.300*3/5+9,1.667*3/5) --(0.385*3/5+9,2.400*3/5)--(0.513*3/5+9,3.050*3/5)--(0.641*3/5+9,3.440*3/5)--(0.769*3/5+9,3.700*3/5)--(0.897*3/5+9,3.886*3/5)--(1.026*3/5+9,4.025*3/5)--(1.154*3/5+9,4.133*3/5)--(1.282*3/5+9,4.220*3/5)--(1.410*3/5+9,4.291*3/5)--(1.538*3/5+9,4.350*3/5)--(1.667*3/5+9,4.400*3/5)--(1.795*3/5+9,4.443*3/5)--(1.923*3/5+9,4.480*3/5)--(2.051*3/5+9,4.513*3/5)--(2.179*3/5+9,4.541*3/5)--(2.308*3/5+9,4.567*3/5)--(2.436*3/5+9,4.589*3/5)--(2.564*3/5+9,4.610*3/5)--(2.692*3/5+9,4.629*3/5)--(2.821*3/5+9,4.645*3/5)--(2.949*3/5+9,4.661*3/5)--(3.077*3/5+9,4.675*3/5)--(3.205*3/5+9,4.688*3/5)--(3.333*3/5+9,4.700*3/5)--(3.462*3/5+9,4.711*3/5)--(3.590*3/5+9,4.721*3/5)--(3.718*3/5+9,4.731*3/5)--(3.846*3/5+9,4.740*3/5)--(3.974*3/5+9,4.748*3/5)--(4.103*3/5+9,4.756*3/5)--(4.231*3/5+9,4.764*3/5)--(4.359*3/5+9,4.771*3/5)--(4.487*3/5+9,4.777*3/5)--(4.615*3/5+9,4.783*3/5)--(4.744*3/5+9,4.789*3/5)--(4.872*3/5+9,4.795*3/5)--(5.000*3/5+9,4.800*3/5);
\draw[fill,black] (5,3*3/5+.3)--(0.5*3/5+4.7,3*3/5+.3)--(0.513*3/5+4.7,3.050*3/5+.3)--(0.641*3/5+4.7,3.440*3/5+.3)--(0.769*3/5+4.7,3.700*3/5+.3)--(0.897*3/5+4.7,3.886*3/5+.3)--(1.026*3/5+4.7,4.025*3/5+.3)--(1.154*3/5+4.7,4.133*3/5+.3)--(1.282*3/5+4.7,4.220*3/5+.3)--(1.410*3/5+4.7,4.291*3/5+.3)--(1.538*3/5+4.7,4.350*3/5+.3)--(1.667*3/5+4.7,4.400*3/5+.3)--(1.795*3/5+4.7,4.443*3/5+.3)--(1.923*3/5+4.7,4.480*3/5+.3)--(2*3/5+4.7,4.5*3/5+.3)--(2*3/5+4.7,3)--(5,3)--(5,3*3/5+.3);
\draw[fill,black] (5+4,3*3/5+.3)--(0.5*3/5+4.7+4,3*3/5+.3)--(0.513*3/5+4.7+4,3.050*3/5+.3)--(0.641*3/5+4.7+4,3.440*3/5+.3)--(0.769*3/5+4.7+4,3.700*3/5+.3)--(0.897*3/5+4.7+4,3.886*3/5+.3)--(1.026*3/5+4.7+4,4.025*3/5+.3)--(1.154*3/5+4.7+4,4.133*3/5+.3)--(1.282*3/5+4.7+4,4.220*3/5+.3)--(1.410*3/5+4.7+4,4.291*3/5+.3)--(1.538*3/5+4.7+4,4.350*3/5+.3)--(1.667*3/5+4.7+4,4.400*3/5+.3)--(1.795*3/5+4.7+4,4.443*3/5+.3)--(1.923*3/5+4.7+4,4.480*3/5+.3)--(2*3/5+4.7+4,4.5*3/5+.3)--(2*3/5+4.7+4,3)--(5+4,3)--(5+4,3*3/5+.3);
\draw (9,0) -- (12,0) -- (12,3) -- (9,3) -- (9,0);
\end{tikzpicture}
\caption{The cause of numerical instability in the matrix decomposition~\eqref{eq:asyR}. 
The entries in the black region of $\mathbf{J}_{\nu,M}^{\rm ASY}(\, \underline{r}\,\underline{\omega}^\intercal )$ and $\mathbf{R}_{\nu,M}(\, \underline{r}\,\underline{\omega}^\intercal )$ are large, of similar magnitude, and 
of opposite sign, resulting in numerical overflow issues and severe cancellation errors when 
added together. The nonwhite region of $\mathbf{R}_{\nu,M}(\, \underline{r}\,\underline{\omega}^\intercal )$ contains all the entries of $\mathbf{R}_{\nu,M}(\, \underline{r}\,\underline{\omega}^\intercal )$ that are larger 
in magnitude than $\epsilon$. The asymptotic expansion should only be employed when
$|R_{\nu,M}(r_k\omega_n)|\leq \epsilon$.}
\label{fig:partition}
\end{figure}

By Section~\ref{sec:asymptoticExpansion} we know that if $z \geq s_{\nu,M}(\epsilon)$ then $|R_{\nu,M}(z)|\leq \epsilon$.  
Therefore, we can use~\eqref{eq:asyR} for the $(k,n)$ entry of $\mathbf{J}_\nu(\, \underline{r}\,\underline{\omega}^\intercal )$ provided that
$r_k\omega_n = nk\pi/N\geq s_{\nu,M}(\epsilon)$. This is guaranteed, for instance, 
when
\begin{equation}
\min(k,n) \geq \lceil \alpha N^{\frac{1}{2}} \rceil,
\label{eq:criterion}
\end{equation}
where $\alpha = (s_{\nu,M}(\epsilon)/\pi)^{1/2}$. Therefore, we can 
be more careful by taking the $N\times N$ diagonal 
matrix $Q_1$ with $(Q_1)_{ii}=1$ for $i\geq \lceil \alpha N^{\frac{1}{2}} \rceil$ 
and $0$ otherwise, and compute $\mathbf{J}_\nu(\, \underline{r}\,\underline{\omega}^\intercal )\underline{c}$ 
using the following matrix decomposition:
\begin{equation}
  \mathbf{J}_\nu(\, \underline{r}\,\underline{\omega}^\intercal ) = Q_1\mathbf{J}_{\nu,M}^{\rm ASY}(\, \underline{r}\,\underline{\omega}^\intercal )Q_1 + Q_1\mathbf{R}_{\nu,M}(\, \underline{r}\,\underline{\omega}^\intercal )Q_1 + \mathbf{J}_\nu^{\rm EVAL}(\, \underline{r}\,\underline{\omega}^\intercal ),
\label{eq:StableMatrixDecomposition}
\end{equation}
where $\mathbf{J}_\nu^{\rm EVAL}(\, \underline{r}\,\underline{\omega}^\intercal )$ is the matrix 
\[
 (\mathbf{J}_\nu^{\rm EVAL}(\, \underline{r}\,\underline{\omega}^\intercal ))_{kn} = \begin{cases} J_\nu(r_k\omega_n), & \min(k,n)<\lceil \alpha N^{\frac{1}{2}}\rceil, \\ 0, & \text{otherwise}. \end{cases}
\]

If the $(k,n)$ entry of $Q_1\mathbf{J}_{\nu,M}^{\rm ASY}(\, \underline{r}\,\underline{\omega}^\intercal )Q_1$ 
is nonzero, then $nk\pi/N\geq s_{\nu,M}(\epsilon)$ and hence,~\eqref{eq:StableMatrixDecomposition} 
only approximates $J_{\nu}(r_k\omega_n)$ 
by a weighted sum of trigonometric functions when it is safe to do so. Figure~\ref{fig:decomposition} shows how the criterion~\eqref{eq:criterion}
partitions the entries of $\mathbf{J}_\nu(\, \underline{r}\,\underline{\omega}^\intercal )$. 
A stable $\mathcal{O}(N^{3/2})$ algorithm for evaluating Schl\"{o}milch expansions~\eqref{eq:Schlomilch} 
follows, as we will now explain.

\begin{figure} 
 \centering
 \begin{tikzpicture}
 \draw[fill=black!20] (0,0) -- (1,0) -- (1,4) -- (5,4) -- (5,5) -- (0,5) -- (0,0);
  \draw (0,0) -- (5,0) -- (5,5) -- (0,5) -- (0,0);
\draw (0.200,0.000)--(0.216,0.366)--(0.221,0.476)--(0.226,0.581)--(0.232,0.682)--(0.237,0.778)--(0.242,0.870)--(0.247,0.957)--(0.253,1.042)--(0.258,1.122)--(0.263,1.200)--(0.268,1.275)--(0.274,1.346)--(0.279,1.415)--(0.284,1.481)--(0.289,1.545)--(0.295,1.607)--(0.300,1.667) --(0.385,2.400)--(0.513,3.050)--(0.641,3.440)--(0.769,3.700)--(0.897,3.886)--(1.026,4.025)--(1.154,4.133)--(1.282,4.220)--(1.410,4.291)--(1.538,4.350)--(1.667,4.400)--(1.795,4.443)--(1.923,4.480)--(2.051,4.513)--(2.179,4.541)--(2.308,4.567)--(2.436,4.589)--(2.564,4.610)--(2.692,4.629)--(2.821,4.645)--(2.949,4.661)--(3.077,4.675)--(3.205,4.688)--(3.333,4.700)--(3.462,4.711)--(3.590,4.721)--(3.718,4.731)--(3.846,4.740)--(3.974,4.748)--(4.103,4.756)--(4.231,4.764)--(4.359,4.771)--(4.487,4.777)--(4.615,4.783)--(4.744,4.789)--(4.872,4.795)--(5.000,4.800);
\node[anchor=west] at (1.5,2) (l1) {$Q_1\mathbf{J}_{\nu,M}^{\rm ASY}(\, \underline{r}\,\underline{\omega}^\intercal)Q_1$};
\node[anchor=west] at (-.1,4.2) (l2) {\rotatebox{45}{$\mathbf{J}_\nu^{\rm EVAL}(\, \underline{r}\,\underline{\omega}^\intercal)$}};
\node[anchor=south] at (5,4.9) (l2) {\rotatebox{90}{$N$}};
\node[anchor=east] at (0.1,0) (l2) {$N$};
\node[anchor=east] at (0.1,4) (l2) {$\lceil \alpha N^{\frac{1}{2}}\rceil $};
\node[anchor=south] at (1,4.9) (l2) {\rotatebox{90}{$\lceil \alpha N^{\frac{1}{2}}\rceil $}};
\end{tikzpicture}
\caption{A partition of the entries of $\mathbf{J}_\nu(\, \underline{r}\,\underline{\omega}^\intercal )$ that motivates the matrix decomposition~\eqref{eq:StableMatrixDecomposition}.
The asymptotic expansion~\eqref{eq:HankelExpansion} is used for the $(k,n)$ entry 
if it satisfies $\min(k,n)\geq \lceil \alpha N^{\frac{1}{2}}\rceil$. The gray region shows the nonzero entries of 
$\mathbf{J}_\nu^{\rm EVAL}(\, \underline{r}\,\underline{\omega}^\intercal )$ that are not dealt with by the asymptotic 
expansion. The black curve indicates 
the entries satisfying $|R_{\nu,M}(r_kw_n)|=\epsilon$. A numerically stable $\mathcal{O}(N^{3/2})$ algorithm for evaluating Schl\"{o}milch expansions follows.  }
\label{fig:decomposition}
\end{figure}
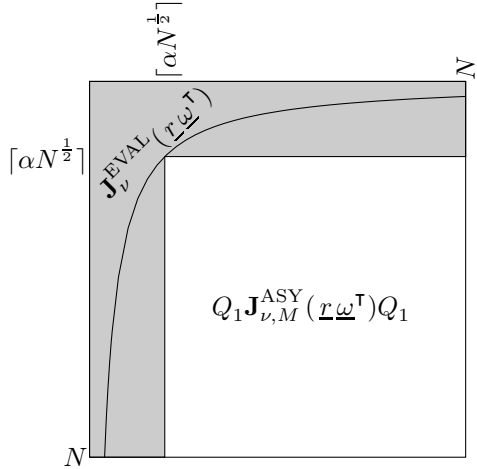

By construction all the entries of $Q_1\mathbf{R}_{\nu,M}(\, \underline{r}\,\underline{\omega}^\intercal )Q_1$ 
are less than $\epsilon$ in magnitude so its 
contribution to the matrix-vector product $\mathbf{J}_\nu(\, \underline{r}\,\underline{\omega}^\intercal )\underline{c}$ 
can be ignored, i.e., we make the approximation
\[
   \mathbf{J}_\nu(\, \underline{r}\,\underline{\omega}^\intercal )\underline{c} \approx Q_1\mathbf{J}_{\nu,M}^{\rm ASY}(\, \underline{r}\,\underline{\omega}^\intercal )Q_1\underline{c} + \mathbf{J}_\nu^{\rm EVAL}(\, \underline{r}\,\underline{\omega}^\intercal )\underline{c}.
\] 
The matrix-vector product $Q_1\mathbf{J}_{\nu,M}^{\rm ASY}(\, \underline{r}\,\underline{\omega}^\intercal )Q_1\underline{c}$
can be computed in $\mathcal{O}(N\log N)$ operations using
DCTs and DSTs by~\eqref{eq:AsymptoticDecomposition}. Moreover, the number of nonzero entries in 
$\mathbf{J}_\nu^{\rm EVAL}(\, \underline{r}\,\underline{\omega}^\intercal )$ is 
$\mathcal{O}(\alpha N^{1/2} N) = \mathcal{O}(N^{3/2})$ and hence, the matrix-vector product 
$\mathbf{J}_\nu^{\rm EVAL}(\, \underline{r}\,\underline{\omega}^\intercal )\underline{c}$ can be computed in $\mathcal{O}(N^{3/2})$ 
operations using direct summation.

This algorithm successfully stabilizes the $\mathcal{O}(N\log N)$ matrix-vector product that 
results from~\eqref{eq:asyR} as it only
employs the asymptotic expansion for entries of $J_\nu(r_k\omega_n)$ for which $r_k\omega_n$ is sufficiently large. In practice, this algorithm is faster than direct summation for $N\geq 100$ (see Figure~\ref{fig:SchlomilchResults}). However, the algorithm 
has an $\mathcal{O}(N^{3/2})$ complexity because the criterion~\eqref{eq:criterion} is  
in hindsight too cautious. We can do better. 

\subsection{A faster evaluation scheme for Schl\"{o}milch expansions}\label{sec:recursivePartition}
Note that the asymptotic expansion~\eqref{eq:HankelExpansion} is accurate 
for all $\mathcal{O}(N^2)$ entries of 
$\mathbf{J}_\nu(\, \underline{r}\,\underline{\omega}^\intercal )$ except for
\[
 \mathcal{O}\left( \sum_{n=1}^N\sum_{k=1}^N \bm{1}_{\{r_k\omega_n< s_{\nu,M}(\epsilon)\}} \right) =  \mathcal{O}\left( \sum_{n=1}^N \alpha^2 N/n \right) = \mathcal{O}\left( N \log N \right)
\]
of them, where $\bm{1}$ is the indicator function and the last equality is from the leading asymptotic behavior of 
the $N$th harmonic number as $N\rightarrow\infty$. Therefore, a lower complexity than 
$\mathcal{O}(N^{3/2})$ seems possible if we use the asymptotic expansion 
for more entries of $\mathbf{J}_\nu(\, \underline{r}\,\underline{\omega}^\intercal )$.
Roughly speaking, we need to refine the 
partitioning of $\mathbf{J}_\nu(\, \underline{r}\,\underline{\omega}^\intercal )$ so 
that the black curve, $|R_{\nu,M}(r_k\omega_n)|=\epsilon$, in Figure~\ref{fig:decomposition}
is better approximated. 

First, we are only allowed to partition $\mathbf{J}_\nu(\, \underline{r}\,\underline{\omega}^\intercal )$ with rectangular matrices 
since for those entries that we do employ the asymptotic expansion we 
need to be able to use DCTs and DSTs to compute matrix-vector products. 
Second, there is a 
balance to be found since each new rectangular partition requires $2M$ more DCTs and DSTs. Too many 
partitions and the cost of computing DCTs and DSTs will be overwhelming, but 
too few and the cost of computing $\mathbf{J}_\nu^{\rm EVAL}(\, \underline{r}\,\underline{\omega}^\intercal )\underline{c}$ will 
dominate. These two competing costs must be balanced.   

To find the balance we introduce a parameter $0<\beta <1$. Roughly speaking, if $\beta\approx 1$ then 
we partition as much 
as possible and the asymptotic expansion is used for every 
entry satisfying $r_k\omega_n\geq s_{\nu,M}(\epsilon)$. If $\beta\approx 0$ then we do 
not refine and we keep the $\mathcal{O}(N^{3/2})$ algorithm from Section~\ref{sec:singlePartition}.
An intermediate value of $\beta$ will balance the two competing costs.

Figure~\ref{fig:Hdecomposition} shows the partition 
of $\mathbf{J}_\nu(\, \underline{r}\,\underline{\omega}^\intercal )$ that we consider and 
it is worth carefully examining that diagram.
The partition corresponds to the following matrix decomposition: 
\begin{equation} 
\begin{aligned} 
\mathbf{J}_\nu(\, \underline{r}\,\underline{\omega}^\intercal ) \approx Q_1\mathbf{J}_{\nu,M}^{\rm ASY}(\, \underline{r}\,\underline{\omega}^\intercal )Q_1 + \sum_{p=1}^P\! \bigg( Q_{2p+1}\mathbf{J}_{\nu,M}^{\rm ASY}(\, \underline{r}\,\underline{\omega}^\intercal )Q_{2p} + Q_{2p}&\mathbf{J}_{\nu,M}^{\rm ASY}(\, \underline{r}\,\underline{\omega}^\intercal )Q_{2p+1}\bigg) \\ 
&+ \mathbf{J}_\nu^{\rm EVAL}(\, \underline{r}\,\underline{\omega}^\intercal ),
\end{aligned}
\label{eq:recursivePartition}
\end{equation} 
where $P$ is the number of partitions, $Q_1$ is the diagonal matrix in Section~\ref{sec:singlePartition}, and 
the $N\times N$ matrices $Q_{2p}$ and $Q_{2p+1}$ are diagonal matrices with 
\[
 (Q_{2p})_{ii} = \begin{cases}1, & \lceil \alpha\beta^{-p}N^{\frac{1}{2}}\rceil \leq i\leq N,\\ 0, & \text{otherwise}, \end{cases} \, (Q_{2p+1})_{ii} = \begin{cases}1, & \lceil\alpha\beta^{p} N^{\frac{1}{2}}\rceil \leq i\leq \lfloor \alpha\beta^{p+1}N^{\frac{1}{2}}\!\rfloor ,\\ 0, & \text{otherwise}. \end{cases}
\]
 
Note that if the $(k,n)$ entry of the matrix $Q_{2p+1}\mathbf{J}_{\nu,M}^{\rm ASY}(\, \underline{r}\,\underline{\omega}^\intercal )Q_{2p}$
is nonzero, then $k\geq \lceil\alpha\beta^{p} N^{\frac{1}{2}}\rceil$ and $n\geq \lceil\alpha\beta^{-p}N^{\frac{1}{2}}\rceil$. Hence, 
$nk \geq \alpha^2 N$ and $r_k\omega_n \geq s_{\nu,M}(\epsilon)$. Similarly, if the $(k,n)$ entry of 
$Q_{2p}\mathbf{J}_{\nu,M}^{\rm ASY}(\, \underline{r}\,\underline{\omega}^\intercal )Q_{2p+1}$ is nonzero, then $r_k\omega_n\geq s_{\nu,M}(\epsilon)$. 
Therefore, we are only employing the asymptotic expansion~\eqref{eq:HankelExpansion}
on entries of $\mathbf{J}_{\nu}(\, \underline{r}\,\underline{\omega}^\intercal)$ for which it is accurate and numerically stable to do so. 

\begin{figure} 
 \centering
 \begin{tikzpicture}
 \draw[fill=black!20] (0,0) -- (0.23255,0) -- (0.23255,.7) -- (0.43478,.7) -- (0.43478,2.7) -- (1,2.7) -- (1,4) -- (2.3,4) -- (2.3,4.56522) -- (4.3,4.56522) -- (4.3,4.76744) -- (5,4.76744) -- (5,5) -- (0,5) -- (0,0);
  \draw (0,0) -- (5,0) -- (5,5) -- (0,5) -- (0,0);
\draw (0.200,0.000)--(0.216,0.366)--(0.221,0.476)--(0.226,0.581)--(0.232,0.682)--(0.237,0.778)--(0.242,0.870)--(0.247,0.957)--(0.253,1.042)--(0.258,1.122)--(0.263,1.200)--(0.268,1.275)--(0.274,1.346)--(0.279,1.415)--(0.284,1.481)--(0.289,1.545)--(0.295,1.607)--(0.300,1.667) --(0.385,2.400)--(0.513,3.050)--(0.641,3.440)--(0.769,3.700)--(0.897,3.886)--(1.026,4.025)--(1.154,4.133)--(1.282,4.220)--(1.410,4.291)--(1.538,4.350)--(1.667,4.400)--(1.795,4.443)--(1.923,4.480)--(2.051,4.513)--(2.179,4.541)--(2.308,4.567)--(2.436,4.589)--(2.564,4.610)--(2.692,4.629)--(2.821,4.645)--(2.949,4.661)--(3.077,4.675)--(3.205,4.688)--(3.333,4.700)--(3.462,4.711)--(3.590,4.721)--(3.718,4.731)--(3.846,4.740)--(3.974,4.748)--(4.103,4.756)--(4.231,4.764)--(4.359,4.771)--(4.487,4.777)--(4.615,4.783)--(4.744,4.789)--(4.872,4.795)--(5.000,4.800);
\node[anchor=west] at (1.5,2) (l1) {$Q_1\mathbf{J}_{\nu,M}^{\rm ASY}(\, \underline{r}\,\underline{\omega}^\intercal)Q_1$};
\node[anchor=west] at (2.4,4.25) (l11) {\footnotesize{$Q_3\mathbf{J}_{\nu,M}^{\rm ASY}(\, \underline{r}\,\underline{\omega}^\intercal)Q_2$}};
\node[anchor=west] at (.42,1.34) (l11) {\footnotesize{\rotatebox{90}{$Q_2\mathbf{J}_{\nu,M}^{\rm ASY}(\, \underline{r}\,\underline{\omega}^\intercal)Q_3$}}};
\node[anchor=west] at (-.1,4.2) (l2) {\rotatebox{45}{$\mathbf{J}_\nu^{\rm EVAL}(\, \underline{r}\,\underline{\omega}^\intercal)$}};
\node[anchor=east] at (0.1,4.5) (l2) {$\lceil\alpha\beta N^{\frac{1}{2}}\rceil$};
\node[anchor=east] at (0.1,4) (l2) {$\lceil\alpha N^{\frac{1}{2}}\rceil$};
\node[anchor=east] at (0.1,2.7) (l2) {$\lceil\alpha\beta^{-1}N^{\frac{1}{2}}\rceil$};
\node[anchor=east] at (0.1,.7) (l2) {$\lceil\alpha\beta^{-2}N^{\frac{1}{2}}\rceil$};
\node[anchor=east] at (0.1,0) (l2) {$N$};
\node[anchor=south] at (5,4.9) (l2) {\rotatebox{90}{$N$}};
\node[anchor=south] at (.9,4.9) (l2) {\rotatebox{90}{$\lceil\alpha N^{\frac{1}{2}}\rceil$}};
\node[anchor=south] at (.35,4.9) (l2) {\rotatebox{90}{$\lceil\beta(\alpha N^{\frac{1}{2}})\rceil$}};
\node[anchor=south] at (5-2.7,4.9) (l2) {\rotatebox{90}{$\lceil\alpha\beta^{-1}N^{\frac{1}{2}}\rceil$}};
\node[anchor=south] at (4.3,4.9) (l2) {\rotatebox{90}{$\lceil\alpha\beta^{-2}N^{\frac{1}{2}}\rceil$}};
\draw[dashed] (0.43478,0)--(0.43478,1);
\draw[dashed] (1,0)--(1,3);
\draw[dashed] (4,4.56522)--(5,4.56522);
\draw[dashed] (2,4)--(5,4);
\end{tikzpicture}
\caption{A partition of the entries of $\mathbf{J}_\nu(\, \underline{r}\,\underline{\omega}^\intercal )$ that motivates the matrix decomposition~\eqref{eq:recursivePartition}.
The gray region shows the nonzero entries of 
$\mathbf{J}_\nu^{\rm EVAL}(\, \underline{r}\,\underline{\omega}^\intercal )$ that are not dealt with by the asymptotic 
expansion. The black curve indicates 
the entries satisfying $|R_{\nu,M}(r_kw_n)|=\epsilon$. 
The asymptotically near-optimal choice of the
parameter $\beta$ is $\mathcal{O}(1/\log N)$ and the number of 
partitions should grow like $\mathcal{O}(\log N/\log\!\log N)$ with $N$.
A numerically stable $\mathcal{O}(N(\log N)^2/\log\!\log N)$ 
algorithm for evaluating Schl\"{o}milch expansions follows. 
}
\label{fig:Hdecomposition}
\end{figure}
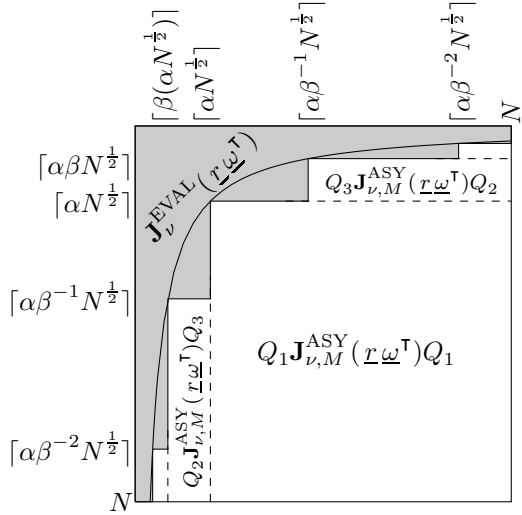

Each matrix-vector product of the form $Q_{2p+1}\mathbf{J}_\nu^{\rm ASY}(\, \underline{r}\,\underline{\omega}^\intercal )Q_{2p}\underline{c}$ 
requires $2M$ DCTs and DSTs from~\eqref{eq:AsymptoticDecomposition} 
and hence, costs $\mathcal{O}(N\log N)$ operations. 
There are a total of $2P+1$ matrices of this form in~\eqref{eq:recursivePartition}
so exploiting the asymptotic expansion requires $\mathcal{O}( P N\log N )$ operations. 

The cost of the matrix-vector product $\mathbf{J}_\nu^{\rm EVAL}(\, \underline{r}\,\underline{\omega}^\intercal )\underline{c}$ 
is proportional to the number of nonzero entries in $\mathbf{J}_\nu^{\rm EVAL}(\, \underline{r}\,\underline{\omega}^\intercal )$. 
Ignoring the constant $\alpha$,  
this is approximately (by counting the entries in rectangular submatrices of $\mathbf{J}_\nu^{\rm EVAL}(\, \underline{r}\,\underline{\omega}^\intercal )$, see Figure~\ref{fig:Hdecomposition})
\[ 
 2\beta^P N^{\frac{3}{2}} + 2\sum_{p=1}^P \beta^{-p} N^{\frac{1}{2}}(\beta^{p-1} N^{\frac{1}{2}} - \beta^{p}N^{\frac{1}{2}} ) = 2\beta^PN^{\frac{3}{2}} + 2P N\left(\frac{1}{\beta} - 1\right) = \mathcal{O}\left( PN/\beta\right),
\]
where the last equality follows from the assumption that $\beta^PN^{1/2} = \mathcal{O}(1)$.

To balance the competing $\mathcal{O}(PN\log N)$ cost of exploiting the asymptotic expansion 
with the $\mathcal{O}(PN/\beta)$ cost of computing $\mathbf{J}_\nu^{\rm EVAL}(\, \underline{r}\,\underline{\omega}^\intercal )\underline{c}$,
we set $PN\log N = PN/\beta$ and find that $\beta = \mathcal{O}(1/\log N)$. Moreover, to
ensure that the assumption $\beta^{P}N^{\frac{1}{2}}=\mathcal{O}(1)$ holds we take
$P = \mathcal{O}( \log N / \log \beta ) = \mathcal{O}( \log N / \log \log N )$.
Thus, the number of partitions should slowly grow with $N$ to balance the competing 
costs. 
With these asymptotically optimal choices of $\beta$ and $P$ the algorithm for 
computing the matrix-vector product $\mathbf{J}_\nu(\, \underline{r}\,\underline{\omega}^\intercal )\underline{c}$
via~\eqref{eq:recursivePartition} requires 
$\mathcal{O}(PN\log N + PN/\beta) = \mathcal{O}( N(\log N)^2/\log \log N)$ operations.
More specifically, the complexity of the algorithm is 
$\mathcal{O}( N(\log N)^2\log(1/\epsilon)/\log \log N)$ since $M=\mathcal{O}(\log(1/\epsilon))$, see Figure~\ref{fig:SchlomilchResults}.

Though, the implicit constants in $\beta = \mathcal{O}(1/\log N)$ 
and $P = \mathcal{O}(\log N/\log\!\log N)$ do not change the complexity of the algorithm, they must still be 
decided on. After numerical experiments we set $\beta = \min(3/\log N,1)$ and 
$P = \lceil \log( 30\alpha^{-1} N^{-1/2} )/\log\beta\rceil$ for computational 
efficiency reasons. 

Table~\ref{tab:algorithmicParameters} summarizes the 
algorithmic parameters that we have carefully selected. The user is 
required to specify the problem with an integer $\nu$, a vector of expansion coefficients 
$\underline{c}$ in~\eqref{eq:Schlomilch}, and then provide a working accuracy 
$\epsilon>0$. All other algorithmic parameters are selected in a 
near-optimal manner based on analysis.

\begin{table} 
\centering
\begin{tabular}{ccc}
\hline
 \rule{0pt}{3ex}Parameter\rule{0pt}{3ex} & Short description & Formula \\[2pt]
 \hline 
 \rule{0pt}{3ex}$M$\rule{0pt}{3ex} & Number of terms in~\eqref{eq:HankelExpansion} & $\max( \lfloor0.3\log(1/\epsilon)\rfloor, 3)$\\[2pt]
 $s_{\nu,M}(\epsilon)$ & $z\geq s_{\nu,M}(\epsilon) \Rightarrow |R_{\nu,M}(z)|\leq \epsilon$ & see Table~\ref{tab:sMTable}\\[2pt]
 $\alpha$ & Partitioning parameter & $(s_{\nu,M}(\epsilon)/\pi)^{1/2}$\\[2pt]
 $\beta$ & Refining parameter & $\min(3/\log N,1)$\\[2pt]
 $P$ & Number of partitions & $\lceil \log( 30\alpha^{-1} N^{-1/2} )/\log\beta\rceil$\\[2pt]
 \hline
\end{tabular}
 \caption{Summary of the algorithmic parameters for computing~\eqref{eq:SchlomilchMatrix}. The user is required to 
 specify the problem with an integer $\nu$, a vector of expansion coefficients 
$\underline{c}$ in~\eqref{eq:Schlomilch}, and provide a working accuracy 
$\epsilon>0$. All other algorithmic parameters 
are carefully selected based on analysis.}
 \label{tab:algorithmicParameters}
\end{table}

\begin{remark}\label{rmk:gammaShift}
A fast evaluation scheme for $f(r) = \sum_{n=1}^N c_n J_\nu((n+\gamma)\pi r)$, where $\gamma$ is a constant, 
immediately follows from this section. 
The constants $d_1$ and $d_2$ in~\eqref{eq:cos} and~\eqref{eq:sin} should be replaced by diagonal 
matrices $D_{\underline{v}}$ and $D_{\underline{w}}$, where $v_k = \cos( -\gamma k\pi/N+(2\nu+1)\pi/4)$ 
and $w_k = \sin( -\gamma k\pi/N+(2\nu+1)\pi/4)$, respectively.
\end{remark}

\subsection{Numerical results for evaluating Schl\"{o}milch expansions}
We now compare three algorithms for evaluating Schl\"{o}milch expansions: (1) 
Direct summation (see Section~\ref{sec:existing}); (2) Our $\mathcal{O}(N^{3/2})$ 
algorithm (see Section~\ref{sec:singlePartition}), and; (3) Our 
$\mathcal{O}(N(\log N)^2/\log\!\log N)$ 
algorithm (see Section~\ref{sec:recursivePartition}). The three algorithms have 
been implemented in MATLAB and are publicly available from~\cite{Townsend_15_01}. 

Figure~\ref{fig:SchlomilchResults} (left) shows the 
execution time for the three algorithms for evaluating Schl\"{o}milch expansions for $\nu=0$. 
Here, we select the working accuracy as $\epsilon = 10^{-15}$ and use 
Table~\ref{tab:algorithmicParameters}
to determine the other algorithmic parameters. We find that our 
$\mathcal{O}(N(\log N)^2/\log\!\log N)$ algorithm in Section~\ref{sec:recursivePartition} 
is faster than direct summation for $N\geq 100$ and faster than our $\mathcal{O}(N^{3/2})$ 
algorithm for $N\geq 160$. In fact, for $N\leq 158$ the number of partitions, $P$,  
is selected to be $0$ and the $\mathcal{O}(N(\log N)^2/\log\!\log N)$ algorithm in Section~\ref{sec:recursivePartition}
is identical to the $\mathcal{O}(N^{3/2})$ algorithm in Section~\ref{sec:singlePartition}.  
For a relaxed working accuracy of $\epsilon = 10^{-3}$ or $\epsilon=10^{-8}$
our $\mathcal{O}(N(\log N)^2/\log\!\log N)$ algorithm becomes even more 
computationally advantageous over direct summation. 

Figure~\ref{fig:SchlomilchResults} (right) shows the execution 
time for our $\mathcal{O}(N(\log N)^2/\log\!\log N)$ algorithm with $N = 5,\!000$, 
$1\leq M\leq 20$, and working accuracies $\epsilon=10^{-15},10^{-11},\ldots,10^{-3}$.  
For each $\epsilon>0$ there is a choice of $M$ that minimizes the
execution time.  Numerical experiments like these motivate
the choice $M = \max( \lfloor0.3\log(1/\epsilon)\rfloor, 3)$ in~\eqref{eq:AsymptoticDecomposition}.

\begin{figure} 
 \centering 
\begin{minipage}{.49\textwidth} 
 \centering 
 \begin{overpic}[width=\textwidth]{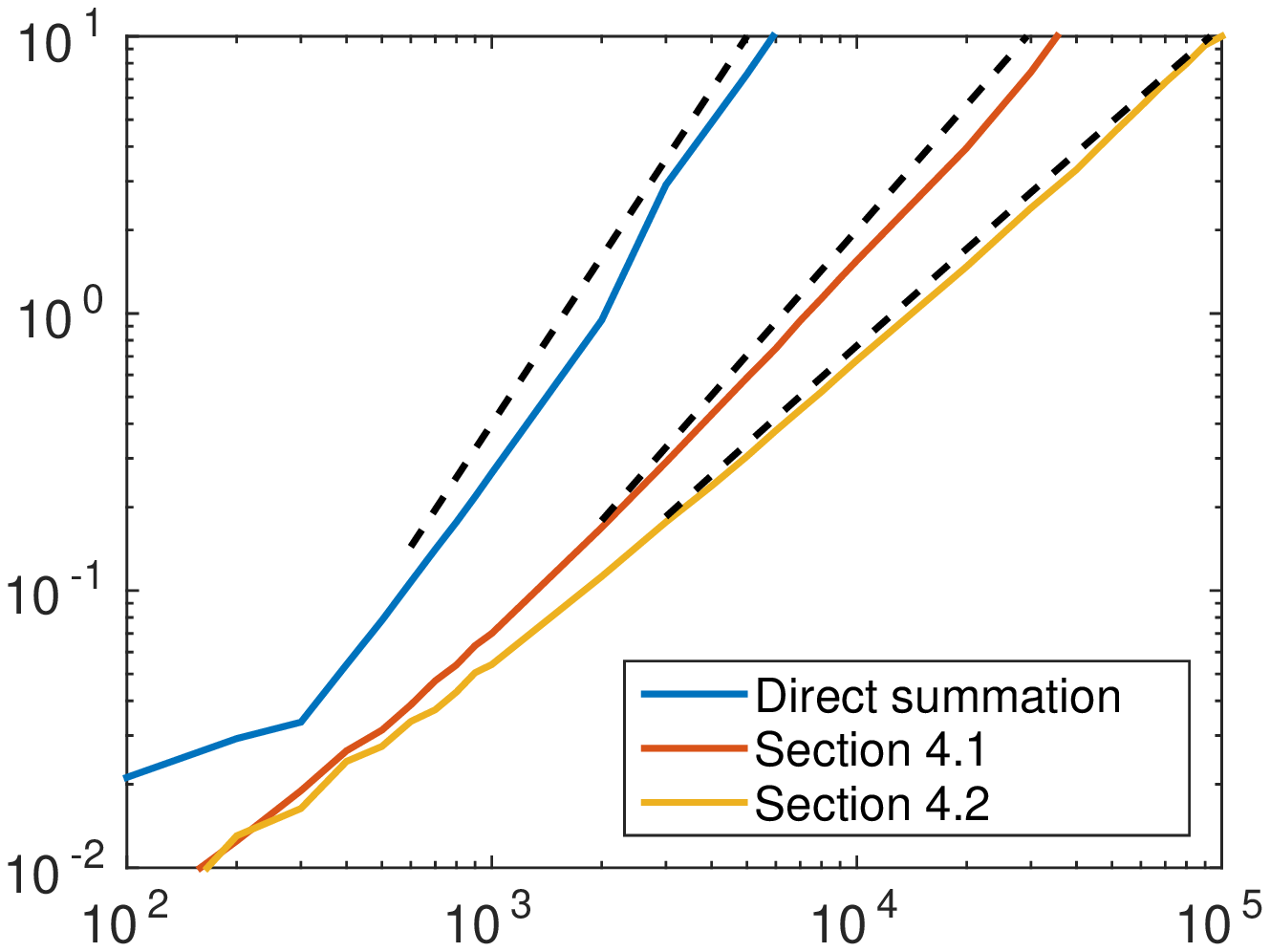}
  \put(41,53) {\footnotesize{\rotatebox{60}{$\mathcal{O}(N^2)$}}}
  \put(53,48) {\footnotesize{\rotatebox{46}{$\mathcal{O}(N^{3/2})$}}}
  \put(51,27) {\footnotesize{\rotatebox{42}{$\mathcal{O}(N(\log N)^2/\log\!\log N)$}}}
  \put(50,0) {$N$}
  \put(0,20) {\rotatebox{90}{Execution time}}
 \end{overpic}
\end{minipage}
\begin{minipage}{.49\textwidth} 
 \centering 
 \begin{overpic}[width=\textwidth]{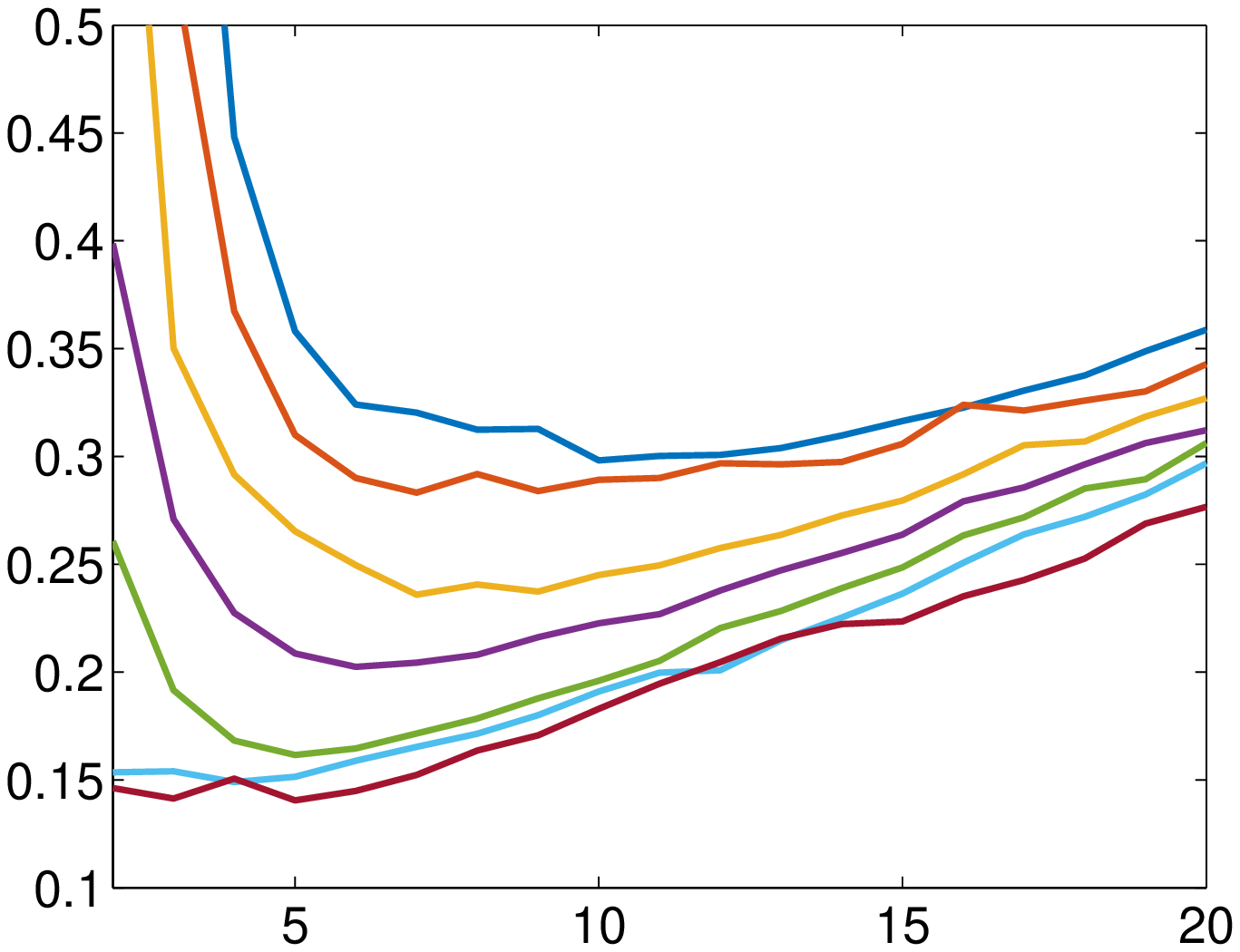}
 \put(50,0) {$M$}
 \put(0,20) {\rotatebox{90}{Execution time}} 
 \put(32,44) {\footnotesize{\rotatebox{-10}{$\epsilon=10^{-15}$}}}
 \put(22,28.2) {\footnotesize{\rotatebox{-16}{$\epsilon=10^{-9}$}}}
 \put(20,11) {\footnotesize{$\epsilon=10^{-3}$}}
 \end{overpic}
\end{minipage}
 \caption{Left: Execution time for computing~\eqref{eq:SchlomilchMatrix} with $\mathcal{O}(N^2)$ direct summation, 
 the $\mathcal{O}(N^{3/2})$ algorithm (see Section~\ref{sec:singlePartition}), and the 
 $\mathcal{O}(N(\log N)^2/\log\!\log N)$ algorithm (see Section~\ref{sec:recursivePartition}). 
 Right: Computational timings for $N = 5,\!000$ with $2\leq M\leq 20$ and $\epsilon=10^{-15},10^{-11},\ldots,10^{-3}$. 
 Numerical experiments like these motivate the choice $M = \max( \lfloor0.3\log(1/\epsilon)\rfloor, 3)$ in~\eqref{eq:AsymptoticDecomposition}.}
 \label{fig:SchlomilchResults}
\end{figure}

\section{A fast evaluation scheme for Fourier--Bessel expansions}\label{sec:FourierBessel}
In this section we adapt the $\mathcal{O}(N(\log N)^2/\log\!\log N)$ algorithm in 
Section~\ref{sec:recursivePartition} to an evaluation scheme for expansions of the 
form: 
\begin{equation}
 f(r) = \sum_{n=1}^N c_n J_\nu(rj_{0,n}), \qquad r\in[0,1],
\label{eq:FourierBessel}
\end{equation}
where $\nu$ is an integer and $j_{0,n}$ is the $n$th positive root of $J_0$.
For $\nu=0$,~\eqref{eq:FourierBessel} 
is the Fourier--Bessel expansion of order $0$ of the function $f:[0,1]\rightarrow \mathbb{C}$.
The algorithm that we describe evaluates~\eqref{eq:FourierBessel} at $r_k = k/N$ 
for $1\leq k\leq N$, which is equivalent to computing the matrix-vector product 
$\mathbf{J}_\nu(\, \underline{r}\,\underline{\smash{j}}_0^\intercal )\underline{c}$, where 
$\underline{\smash{j}}_0$ is a column vector with entries $j_{0,1},\ldots,j_{0,N}$.  

Developing a fast algorithm for computing the matrix-vector product $\mathbf{J}_\nu(\, \underline{r}\,\underline{\smash{j}}_0^\intercal )\underline{c}$ is a challenge 
for FFT-based algorithms because the positive roots of $J_0$ are not equally-spaced. Here, 
we are able to derive a fast evaluation scheme by considering $j_{0,1},\ldots,j_{0,N}$ as a 
perturbed equally-spaced grid (see Section~\ref{sec:BesselRootsPerturbed}). 

First, we derive a matrix decomposition for $\mathbf{J}_\nu(\, \underline{u}\,(\underline{v}+\underline{w})^\intercal )$, where $\underline{u}$, $\underline{v}$, and 
$\underline{w}$ are column vectors.  Afterwards, we will consider $\underline{v} + \underline{w}$ as 
a perturbation of $\underline{v}$. 
\begin{proposition} 
For integers $\nu$ and $N\geq 1$ and for column vectors $\underline{u}$, $\underline{v}$, 
and $\underline{w}$ of length $N$, we have the following matrix decomposition:
\begin{equation}
 \mathbf{J}_\nu(\, \underline{u}\,(\underline{v}+\underline{w})^\intercal ) = \sum_{s=-\infty}^\infty \sum_{t=0}^\infty \frac{(-1)^t2^{-2t-s}}{t!(t+s)!}D_{\underline{u}}^{2t+s} \mathbf{J}_{\nu-s}(\, \underline{u}\,\underline{v}^\intercal )  D_{\underline{w}}^{2t+s}.
\label{eq:BesselMatrixDecompostion}
\end{equation} 
 \label{thm:BesselDecomposition}
\end{proposition}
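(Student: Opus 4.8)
The plan is to prove \eqref{eq:BesselMatrixDecompostion} entrywise, reducing the matrix identity to a composition of the two expansions already recorded in Section~\ref{sec:BesselFunctionProperties}. Fix indices $k$ and $n$ and write $z = u_k v_n$ and $\delta z = u_k w_n$, so that the $(k,n)$ entry of the left-hand side of \eqref{eq:BesselMatrixDecompostion} is $J_\nu(u_k(v_n+w_n)) = J_\nu(z+\delta z)$. First I would apply the Neumann addition formula \eqref{eq:NeumannAdditionFormula} with this choice of $z$ and $\delta z$ to write
\[
 J_\nu(z+\delta z) = \sum_{s=-\infty}^\infty J_{\nu-s}(z)\,J_s(\delta z).
\]
Next I would expand each factor $J_s(\delta z) = J_s(u_k w_n)$ using the Taylor series \eqref{eq:TaylorSeriesInfinite}, adopting the standard convention that $1/(t+s)! = 0$ whenever $t+s<0$. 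This convention is essential: it makes \eqref{eq:TaylorSeriesInfinite} valid for \emph{every} integer $s$, including the negative orders that arise in the Neumann sum, since for $s=-m<0$ the vanishing of the first $m$ terms correctly reproduces $J_{-m}(x)=(-1)^m J_m(x)$.

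The second step is purely bookkeeping. After substituting the Taylor expansion of $J_s(u_k w_n)$ and factoring the monomial $(u_k w_n)^{2t+s} = u_k^{2t+s}\,w_n^{2t+s}$, the $(k,n)$ entry becomes
\[
 \sum_{s=-\infty}^\infty \sum_{t=0}^\infty \frac{(-1)^t 2^{-2t-s}}{t!(t+s)!}\, u_k^{2t+s}\, J_{\nu-s}(u_k v_n)\, w_n^{2t+s}.
\]
Here I would observe that left-multiplication by $D_{\underline{u}}^{2t+s}$ scales row $k$ of $\mathbf{J}_{\nu-s}(\,\underline{u}\,\underline{v}^\intercal)$ by $u_k^{2t+s}$ and right-multiplication by $D_{\underline{w}}^{2t+s}$ scales column $n$ by $w_n^{2t+s}$, so the displayed expression is exactly the $(k,n)$ entry of $\sum_{s}\sum_{t} \tfrac{(-1)^t 2^{-2t-s}}{t!(t+s)!} D_{\underline{u}}^{2t+s}\mathbf{J}_{\nu-s}(\,\underline{u}\,\underline{v}^\intercal)D_{\underline{w}}^{2t+s}$. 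Since $k$ and $n$ were arbitrary, the matrix identity \eqref{eq:BesselMatrixDecompostion} follows.

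The one point that genuinely requires care — and what I expect to be the main obstacle — is justifying that the two infinite summations may be interchanged, so that substituting a Taylor series inside the Neumann sum really yields the double series on the right-hand side. This reduces to proving absolute convergence of $\sum_{s,t}$ for each fixed $k,n$. The inner $t$-sum converges absolutely because $J_s$ is entire, so its Taylor series converges absolutely at every argument. For the outer $s$-sum I would reuse the estimates from the proof of Lemma~\ref{lem:NeumannBound}, namely $|J_{\nu-s}(z)|\le 1$ together with Kapteyn's inequality \eqref{eq:Jbound}, which yields $|J_s(\delta z)|\le (e|\delta z|/2s)^s$ and hence super-geometric decay in $|s|$ (using $J_{-s}=(-1)^s J_s$ for the tail $s\to-\infty$). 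This makes the double series absolutely convergent, so Fubini's theorem for series applies and the rearrangement is valid. I would emphasize that no smallness assumption on $\delta z = u_k w_n$ is needed: \eqref{eq:BesselMatrixDecompostion} is an exact, untruncated identity, and the smallness of the perturbation only enters later, through Lemma~\ref{lem:NeumannBound}, when the series is truncated for the algorithm in Sections~\ref{sec:FourierBessel} and~\ref{sec:DiscreteHankelTransform}.
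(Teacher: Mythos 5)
Your proposal is correct and follows essentially the same route as the paper: fix an entry $(k,n)$, apply the Neumann addition formula \eqref{eq:NeumannAdditionFormula} with $z=u_kv_n$ and $\delta z=u_kw_n$, expand $J_s(u_kw_n)$ via the Taylor series \eqref{eq:TaylorSeriesInfinite}, and identify the resulting double sum with the $(k,n)$ entry of the right-hand side. The only differences are that you make explicit two points the paper leaves tacit --- the convention $1/(t+s)!=0$ for $t+s<0$ and the absolute-convergence argument (via Kapteyn's inequality) justifying the interchange of the two sums --- both of which strengthen rather than alter the argument.
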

\begin{proof} 
 We prove~\eqref{eq:BesselMatrixDecompostion} by showing that the 
 $(k,n)$ entry of the matrix on the left-hand side and right-hand side 
 are equal for $1\leq k,n\leq N$.
 
 Pick any $1\leq k,n\leq N$. By the Neumann addition formula~\eqref{eq:NeumannAdditionFormula} 
 we have 
 \begin{equation}
  (\mathbf{J}_\nu(\, \underline{u}\,(\underline{v}+\underline{w})^\intercal ))_{kn} = J_{\nu}( u_kv_n + u_kw_n ) = \sum_{s=-\infty}^\infty J_{\nu-s}( u_kv_n ) J_{s}(u_kw_n).
\label{eq:NeumannAgain}
\end{equation}
Moreover, by applying the Taylor series expansion of Bessel functions~\eqref{eq:TaylorSeriesInfinite} 
we have 
\begin{equation}
 J_{s}(u_kw_n) = \sum_{t=0}^\infty \frac{(-1)^t2^{-2t-s}}{t!(t+s)!}(u_kw_n)^{2t+s}.
\label{eq:Taylor}
\end{equation} 
By substituting~\eqref{eq:Taylor} into~\eqref{eq:NeumannAgain} we obtain
\[
 (\mathbf{J}_\nu(\, \underline{u}\,(\underline{v}+\underline{w})^\intercal ))_{kn} = \sum_{s=-\infty}^\infty\sum_{t=0}^\infty \frac{(-1)^t2^{-2t-s}}{t!(t+s)!} J_{\nu-s}( u_kv_n ) (u_kw_n)^{2t+s},
\]
and the result follows. 
\end{proof}

We now wish to write down a particular instance of~\eqref{eq:BesselMatrixDecompostion} that is 
useful for computing $\mathbf{J}_\nu(\, \underline{r}\,\underline{\smash{j}}_0^\intercal )\underline{c}$.
By Lemma~\ref{lem:BesselInequalities} we can write $j_{0,n} = \tilde{\omega}_n + b_n$, where 
$\tilde{\omega}_n = (n-1/4)\pi$ and $b_n$ is a number such that $0\leq b_n\leq 1/(8(n-1/4)\pi)$. In 
vector notation we have $\underline{\smash{j}}_0 = \tilde{\underline{\omega}} + \underline{b}$. Hence,
by Proposition~\ref{thm:BesselDecomposition} we have 
\begin{equation}
 \mathbf{J}_\nu(\, \underline{r}\,\underline{\smash{j}}_0^\intercal ) = \mathbf{J}_\nu(\, \underline{r}\,(\tilde{\underline{\omega}}+\underline{b})^\intercal ) = \sum_{s=-\infty}^\infty \sum_{t=0}^\infty \frac{(-1)^t2^{-2t-s}}{t!(t+s)!}D_{\underline{r}}^{2t+s} \mathbf{J}_{\nu-s}(\, \underline{r}\,\tilde{\underline{\omega}}^\intercal )  D_{\underline{b}}^{2t+s}.
\label{eq:doubleRank1Sum}
\end{equation}
This is a useful matrix decomposition since each term in the double sum can be 
applied to a vector in $\mathcal{O}( N (\log N)^2/\log\!\log N)$ operations. 
The diagonal matrices $D_{\underline{b}}^{2t+s}$ and $D_{\underline{r}}^{2t+s}$ have 
$\mathcal{O}(N)$ matrix-vector products and $\mathbf{J}_{\nu-s}(\, \underline{r}\,\tilde{\underline{\omega}}^\intercal )$
has $\mathcal{O}( N (\log N)^2/\log\!\log N)$ matrix-vector products (see Section~\ref{sec:recursivePartition} and Remark~\ref{rmk:gammaShift}). 
However, for~\eqref{eq:doubleRank1Sum} to be practical 
we must truncate the inner and outer sums. 

Let $K\geq 1$ be an integer and truncate the outer sum in~\eqref{eq:doubleRank1Sum} to 
$\sum_{s=-K+1}^{K-1}$. By Lemma~\ref{lem:NeumannBound}, with $z=r_k\tilde{\omega}_n$ and $\delta z=r_kb_n$, the 
truncated Neumann addition formula is still accurate, up to an error of $\epsilon$, for the 
$(k,n)$ entry of $\mathbf{J}_\nu(\, \underline{r}\,\underline{\smash{j}}_0^\intercal )$ 
provided that
\[
 |r_kb_n| \leq |b_n| \leq \frac{1}{8(n-\frac{1}{4})\pi} \leq \frac{2}{e}\left(\frac{\epsilon}{5.2}\right)^{\frac{1}{K}}.
\]
Solving for $n$ we find that
\begin{equation}
 n\geq \frac{e}{16\pi}\left(\frac{5.2}{\epsilon}\right)^{\frac{1}{K}} + \frac{1}{4} =\vcentcolon p_{K}(\epsilon).
\label{eq:pKe}
\end{equation}
That is, once we truncate the outer sum in~\eqref{eq:doubleRank1Sum} 
to $\sum_{s=-K+1}^{K-1}$ the matrix decomposition is still accurate
for all the columns of $\mathbf{J}_\nu(\, \underline{r}\,\underline{\smash{j}}_0^\intercal )$ 
except the first $\lfloor p_K(\epsilon) \rfloor$. For example when $K=6$, $p_6(10^{-15})\approx 22.7$ 
and hence, once the outer sum is truncated we must not use~\eqref{eq:doubleRank1Sum} on 
the first $22$ columns of $\mathbf{J}_\nu(\, \underline{r}\,\underline{\smash{j}}_0^\intercal )$. 
Based on numerical experiments we pick $K\geq 1$ to be the smallest integer so that 
$p_K(\epsilon)\leq 30$ and hence, $K = \mathcal{O}(\log(1/\epsilon))$.

Next, we let $T\geq 1$ be an integer and truncate the inner sum to $\sum_{t = 0}^{T-1}$. 
By Section~\ref{sec:BesselTaylor} the truncated Taylor series expansion is 
accurate, up to an error of $\epsilon$, for the $(k,n)$ entry of $\mathbf{J}_\nu(\, \underline{r}\,\underline{\smash{j}}_0^\intercal )$ provided that 
\[
 |r_kb_n| \leq |b_n| \leq \frac{1}{8(n-\frac{1}{4})\pi} \leq \min_{0 \leq s \leq K-1} t_{s,T}(\epsilon)\approx \left(2^{2T}(T!)^2\epsilon\right)^{\frac{1}{2T}},
\]
where the minimization is taken over $0\leq s\leq K-1$, instead of $-K+1\leq s\leq K-1$, because 
of the relation $J_{-s}(z) = (-1)^sJ_s(z)$~\cite[(10.4.1)]{NISTHandbook}.
Solving for $n$ we find that
\begin{equation}
  n\geq \frac{\epsilon^{-1/(2T)}}{16\pi (T!)^{1/T}} + \frac{1}{4} =\vcentcolon q_{T}(\epsilon).
\label{eq:qTe}
\end{equation}
Thus, once the inner sum is truncated in~\eqref{eq:doubleRank1Sum} to $\sum_{t=0}^{T-1}$
the decomposition is accurate for all but the first 
$\lfloor q_T(\epsilon) \rfloor$ columns of 
$\mathbf{J}_\nu(\, \underline{r}\,\underline{\smash{j}}_0^\intercal )$. For example when $T=3$, 
$q_3(10^{-15}) \approx 3.7$ and we must 
not use~\eqref{eq:doubleRank1Sum}
on the first $3$ columns of $\mathbf{J}_\nu(\, \underline{r}\,\underline{\smash{j}}_0^\intercal )$.
In practice, we select $T\geq 1$ to be the smallest integer so that 
$q_T(\epsilon)\leq 30$.

To avoid employing the decomposition~\eqref{eq:doubleRank1Sum} with truncated sums $\sum_{-K+1}^{K-1}\sum_{t=0}^{T-1}$ on the 
first $\lfloor\max(p_K(\epsilon),q_T(\epsilon))\rfloor$ columns of $\mathbf{J}_\nu(\, \underline{r}\,\underline{\smash{j}}_0^\intercal )$, 
we partition the vector $\underline{c}$. That is, we write 
$\mathbf{J}_\nu(\, \underline{r}\,\underline{\smash{j}}_0^\intercal )\underline{c} = \mathbf{J}_\nu(\, \underline{r}\,\underline{\smash{j}}_0^\intercal )\underline{c}_1 + \mathbf{J}_\nu(\, \underline{r}\,\underline{\smash{j}}_0^\intercal )\underline{c}_2$, where
\[
 (\underline{c}_1)_n = \begin{cases} c_n, & n>\lfloor\max(p_K(\epsilon),q_T(\epsilon))\rfloor,\\ 0, & {\rm otherwise}, \end{cases} \qquad (\underline{c}_2)_n = \begin{cases} c_n, & n\leq\lfloor\max(p_K(\epsilon),q_T(\epsilon))\rfloor,\\ 0,& {\rm otherwise}. \end{cases}
\]
First, we compute $\underline{f}_1 =\mathbf{J}_\nu(\, \underline{r}\,\underline{\smash{j}}_0^\intercal )\underline{c}_1$
using~\eqref{eq:doubleRank1Sum} with truncated sums $\sum_{-K+1}^{K-1}\sum_{t=0}^{T-1}$ in $\mathcal{O}(N(\log N)^2/\log\!\log N)$ operations 
and then compute $\underline{f}_2 =\mathbf{J}_\nu(\, \underline{r}\,\underline{\smash{j}}_0^\intercal )\underline{c}_2$ using direct summation 
in $\mathcal{O}(N)$ operations.

\subsection{Rearranging the computation for efficiency}\label{sec:rearrange}
At first it seems that computing $\mathbf{J}_\nu(\, \underline{r}\,\underline{\smash{j}}_0^\intercal )\underline{c}$
is $(2K-1)T$ times the cost of evaluating a Schl\"{o}milch expansion, since there are 
$(2K-1)T$ terms in $\sum_{-K+1}^{K-1}\sum_{t=0}^{T-1}$. However, the computation can be rearranged so the evaluation of~\eqref{eq:FourierBessel}
is only $2T+K-2$ times the cost.  For $K=6$ 
and $T=3$, the values that we take when $\epsilon=10^{-15}$, we have $(2K-1)T=33$ and $2T+K-2=10$ so there is a significant 
computational saving here.

Since $J_{-s}(z) = (-1)^sJ_{s}(z)$~\cite[(10.4.1)]{NISTHandbook}
we have
\[
\begin{aligned}
  \mathbf{J}&_\nu(\, \underline{r}\,\underline{\smash{j}}_0^\intercal )\underline{c} = \sum_{s=-K+1}^{K-1} \sum_{t=0}^{T-1} \frac{(-1)^t2^{-2t-s}}{t!(t+s)!}D_{\underline{r}}^{2t+s} \mathbf{J}_{\nu-s}(\, \underline{r}\,\tilde{\underline{\omega}}^\intercal )  D_{\underline{b}}^{2t+s}\underline{c}\\
  & = \sum_{s=0}^{K-1}{\!\!}^{'} \sum_{t=0}^{T-1} \frac{(-1)^t2^{-2t-s}}{t!(t+s)!}D_{\underline{r}}^{2t+s} \left[ \mathbf{J}_{\nu-s}(\, \underline{r}\,\tilde{\underline{\omega}}^\intercal ) + (-1)^s\mathbf{J}_{\nu+s}(\, \underline{r}\,\tilde{\underline{\omega}}^\intercal ) \right] D_{\underline{b}}^{2t+s}\underline{c}\\
  & = \!\!\!\sum_{u=0}^{2T+K-3}\!\!\!\!\! D_{\underline{r}}^{u}\!\! \left[\!\sum_{t=\max(\lceil \frac{u-K+1}{2}\rceil,0)}^{\min(\lfloor \frac{u}{2}\rfloor,T-1)}{}^{\!\!\!\!\!\!\!\!\!\!\!\!\!\!\!\!\!\!\!\!\!''\,\,\,\,\,\,} \frac{(-1)^t2^{-u}}{t!(u-t)!}\!\! \left[\mathbf{J}_{\nu-u+2t}(\, \underline{r}\,\tilde{\underline{\omega}}^\intercal ) + (-1)^{u-2t}\mathbf{J}_{\nu+u-2t}(\, \underline{r}\,\tilde{\underline{\omega}}^\intercal )\right] \! \right] \!\! D_{\underline{b}}^{u}\underline{c},\\
  \end{aligned} 
\]
where the single prime on the sum indicates that the first term is halved and the double prime 
indicates that the last term is halved. Here, the last equality follows by letting 
$u=2t+s$ and performing a change of variables. 
Now, for each $0\leq u\leq 2T+K-3$, the inner sum can be computed at the cost of
one application of the $\mathcal{O}(N(\log N)^2/\log\!\log N)$ algorithm in Section~\ref{sec:recursivePartition}.  
Since each evaluation of a Schl\"{o}milch expansion costs 
$\mathcal{O}(N(\log N)^2\log(1/\epsilon)/\log\!\log N)$ operations and 
$K=\mathcal{O}(\log(1/\epsilon))$, a Fourier--Bessel evaluation costs $\mathcal{O}(N(\log N)^2\log(1/\epsilon)^2/\log\!\log N)$ 
operations.
\subsection{Numerical results for evaluating Fourier--Bessel expansions}
%We have implemented the $\mathcal{O}(N(\log N)^2/\log\!\log N)$ algorithm 
%for the evaluation of Fourier--Bessel expansions in MATLAB~\cite{Townsend_15_01}. 
%The numerical experiments are performed with that implementation. 
Figure~\ref{fig:FBResults1} (left) shows the 
execution time for evaluating Fourier--Bessel expansions of order $0$ 
with direct summation and our
$\mathcal{O}(N(\log N)^2/\log\!\log N)$ algorithm with working accuracies
of $\epsilon = 10^{-15}, 10^{-8}, 10^{-3}$. 
Our algorithm is faster than direct 
summation for $N\geq 700$ when $\epsilon = 10^{-15}$. 
Relaxing the working accuracy not only decreases $K$ and $T$, but also speeds 
up the evaluation of each Schl\"{o}milch expansion.
When $\epsilon=10^{-3}$, our 
algorithm is faster than direct summation for $N\geq 100$. 

The algorithmic parameters are selected based on careful analysis so that the evaluation 
scheme is numerically stable and the computed vector $\underline{f}$ 
has a componentwise accuracy of $\mathcal{O}(\epsilon \|\underline{c}\|_1)$, where $\|\cdot\|_1$ is the vector 1-norm. 
In Figure~\ref{fig:FBResults1} (right) we verify this for $\epsilon = 10^{-15}, 10^{-8}, 10^{-3}$ by considering 
the error $\|\underline{f}-\underline{f}^{{\rm quad}}\|_\infty$, where  
$\|\cdot\|_\infty$ is the absolute maximum vector norm and
$\underline{f}^{\rm quad}$ is the vector of values computed in 
quadruple precision using direct summation.
For these experiments we take the entries of $\underline{c}$ as 
realizations of independent 
Gaussians so that $\|\underline{c}\|_1=\mathcal{O}(N^{1/2})$.

\begin{figure} 
 \centering 
\begin{minipage}{.49\textwidth} 
 \begin{overpic}[width=\textwidth]{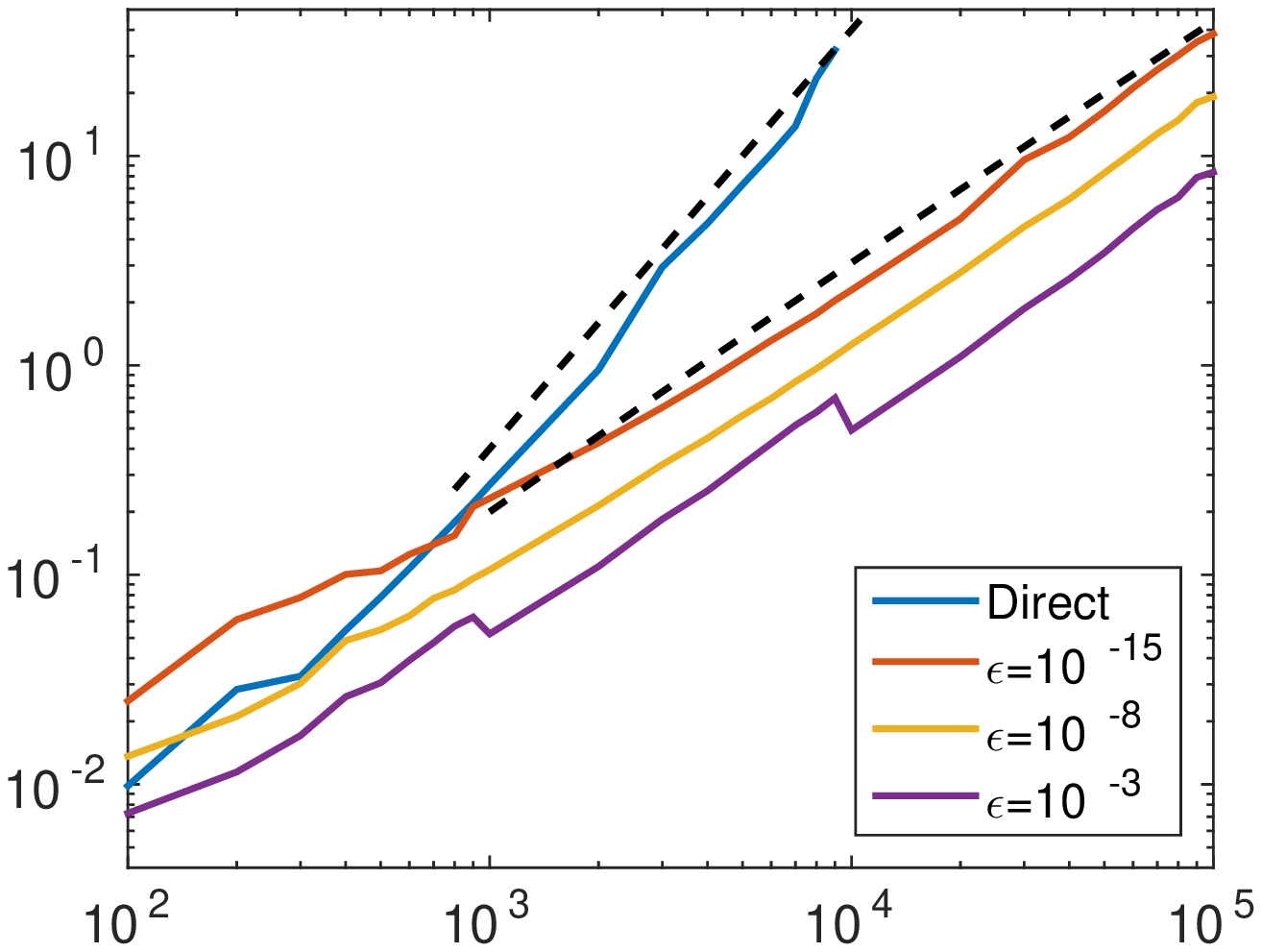}
  \put(47,53) {\footnotesize{\rotatebox{55}{$\mathcal{O}(N^2)$}}}
  \put(43.5,40) {\footnotesize{\rotatebox{34}{$\mathcal{O}(N(\log N)^2/\log\!\log N)$}}}
  \put(50,-1) {$N$}
  \put(-.5,20) {\rotatebox{90}{Execution time}}
 \end{overpic}
\end{minipage}
 \begin{minipage}{.49\textwidth} 
  \begin{overpic}[width=\textwidth]{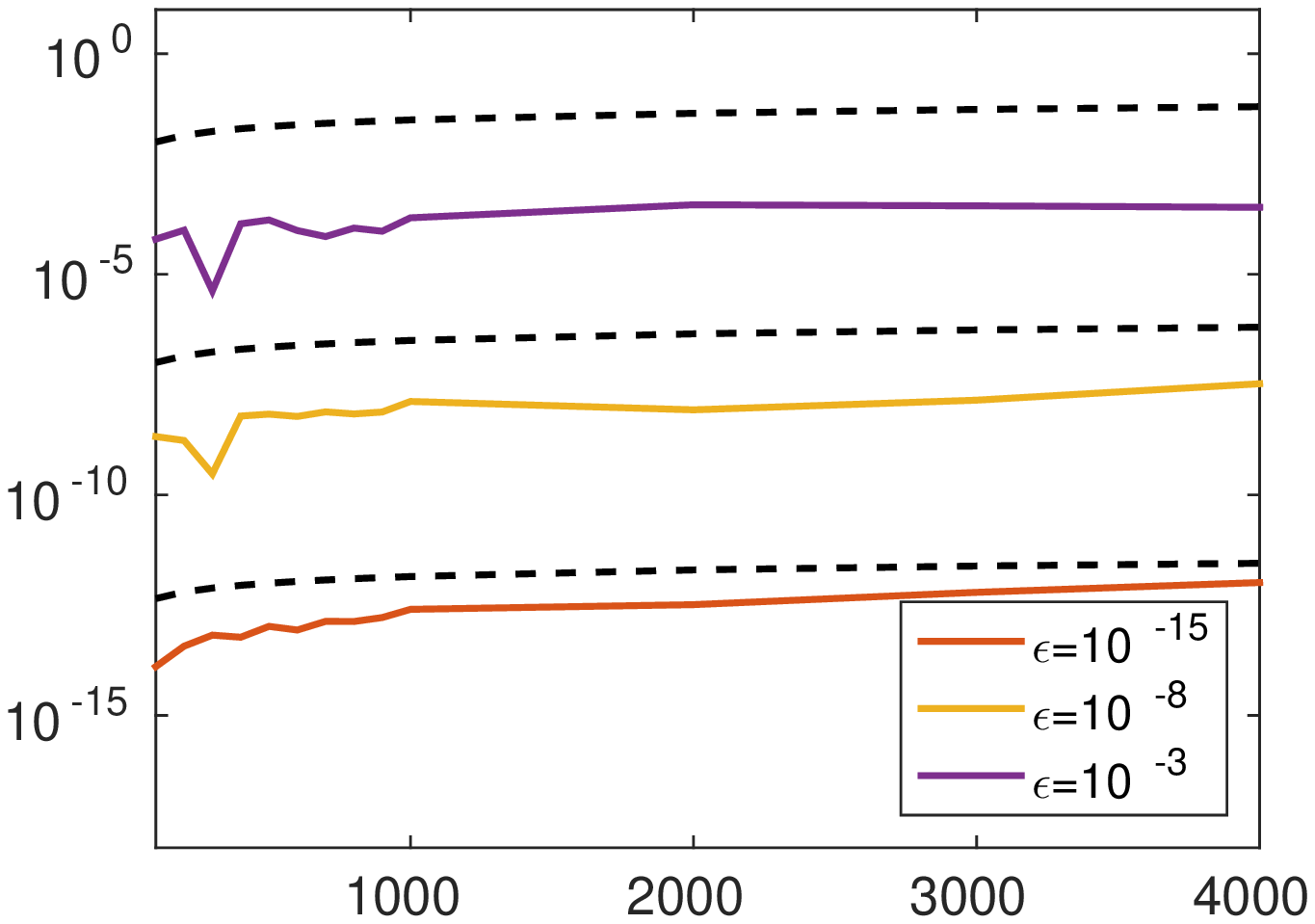}
  \put(45,29.5) {\footnotesize{$10^{-15}\left\|\underline{c}\right\|_1$}}
  \put(45,46.5) {\footnotesize{$10^{-8}\left\|\underline{c}\right\|_1$}}
  \put(45,61.5) {\footnotesize{$10^{-3}\left\|\underline{c}\right\|_1$}}
  \put(50,-1) {$N$}
  \put(-4,23) {\rotatebox{90}{$\|\underline{f} - \underline{f}^{{\rm quad}} \|_\infty$}}
  \end{overpic}
\end{minipage}
 \caption{Left: Execution time for evaluating~\eqref{eq:FourierBessel} at $1/N,\ldots,N/N$ with 
 $\mathcal{O}(N^2)$ direct summation and the $\mathcal{O}(N(\log N)^2/\log\!\log N)$ 
 algorithm described in this section. 
 For a relaxed working accuracy our algorithm becomes even more 
 computationally advantageous over direct summation. Right: 
 Maximum absolute error in the computed vector $\underline{f}$. Here, we 
 compare the observed error (solid lines) with the expected bound of $\mathcal{O}(\epsilon\|\underline{c}\|_1)$ (dotted lines). }
 \label{fig:FBResults1}
\end{figure}

\section{The discrete Hankel transform} \label{sec:DiscreteHankelTransform}
We now adapt the algorithm in Section~\ref{sec:FourierBessel} to the computation 
of the discrete Hankel transform (DHT) of order $0$, which is defined 
as~\cite{Johnson_87_01}
\begin{equation}
 f_k = \sum_{n=1}^N c_n J_0(j_{0,k}j_{0,n}/j_{0,N+1}), \qquad 1\leq k\leq N,
\label{eq:DHT} 
\end{equation} 
where $j_{0,n}$ is the $n$th positive root of $J_0(z)$. It 
is equivalent to the matrix-vector product 
$\mathbf{J}_0(\, \underline{\smash{j}}_0\,\underline{\smash{j}}_0^\intercal/j_{0,N+1})\underline{c}$ 
and is more difficult for FFT-based algorithms than the 
task in Section~\ref{sec:FourierBessel} because it may be regarded as 
evaluating a Fourier--Bessel expansion at points that are not equally-spaced.

To extend our $\mathcal{O}(N(\log N)^2/\log\!\log N)$ algorithm to this setting
we consider the ratios $j_{0,1}/j_{0,N+1},\ldots,j_{0,N}/j_{0,N+1}$ as a perturbed equally-spaced 
grid. By Lemma~\ref{lem:BesselInequalities} we have
\[
 \frac{j_{0,k}}{j_{0,N+1}} = \frac{k-\frac{1}{4}}{N+\frac{3}{4}} + e_{k,N}, \quad |e_{k,N}|\leq \frac{1}{8(N+\frac{3}{4})(k-\frac{1}{4})\pi^2}, \qquad 1\leq k\leq N.
\]
We can write this in vector notation as 
$\underline{\smash{j}}_0/j_{0,N+1} = \tilde{\underline{r}} + \underline{e}_N$, where $\tilde{r}_k = (k-1/4)/(N+3/4)$. 
Since $\mathbf{J}_0(\, (\underline{v}+\underline{w})\underline{u}^\intercal) = \mathbf{J}_0(\, \underline{u}(\underline{v}+\underline{w})^\intercal)^\intercal$ 
and $\mathbf{J}_0(\, \underline{\smash{j}}_0\,\underline{\smash{j}}_0^\intercal/j_{0,N+1}) = \mathbf{J}_0((\tilde{\underline{r}}+\underline{e}_N)\,\underline{\smash{j}}_0^\intercal)$,
we have by Theorem~\ref{thm:BesselDecomposition} the following matrix decomposition:
\begin{equation} 
 \mathbf{J}_0(\, \underline{\smash{j}}_0\,\underline{\smash{j}}_0^\intercal/j_{0,N+1}) = \sum_{s=-\infty}^\infty \sum_{t=0}^\infty \frac{(-1)^t2^{-2t-s}}{t!(t+s)!}D_{\underline{e}_N}^{2t+s} \mathbf{J}_{-s}(\, \tilde{\underline{r}}\,\underline{\smash{j}}_0^\intercal )  D_{\underline{\smash{j}}_0}^{2t+s}.
\label{eq:HankelSum} 
\end{equation} 

Each term in~\eqref{eq:HankelSum} can be applied to a vector 
in $\mathcal{O}( N (\log N)^2/\log\!\log N)$ operations. 
The diagonal matrices $D_{\underline{\smash{j}}_0}^{2t+s}$ and $D_{\underline{e}_N}^{2t+s}$ have 
$\mathcal{O}(N)$ matrix-vector products and by Section~\ref{sec:FourierBessel} 
the matrix-vector product 
$\mathbf{J}_{-s}(\, \tilde{\underline{r}}\,\underline{\smash{j}}_0^\intercal )\underline{c}$ 
can be computed in $\mathcal{O}( N (\log N)^2/\log\!\log N)$ operations.\footnote{Here, $\tilde{r}_k = (k-1/4)/(N+3/4) = (4k-1)/(4N+3)$ so to compute $\mathbf{J}_{-s}(\, \tilde{\underline{r}}\,\underline{\smash{j}}_0^\intercal )\underline{c}$ pad the vector $\underline{c}$ with $3N+3$ zeros, use the algorithm in Section~\ref{sec:FourierBessel} with $N$ replaced by $4N+3$, and then only keep $f_{4i-1}$ for $1\leq i\leq N$.} 
However, for~\eqref{eq:HankelSum} to be practical we must truncate the double sum. 

Let $K\geq 1$ and $T\geq 1$ be integers and consider truncating the sum in~\eqref{eq:HankelSum}
to $\sum_{s=-K+1}^{K-1} \sum_{t=0}^{T-1}$. Using similar reasoning to that in 
Section~\ref{sec:FourierBessel}, if we truncate~\eqref{eq:HankelSum} then the matrix decomposition is accurate, up to an error of $\epsilon$, 
provided that 
\[
 |e_{k,N} j_{0,n}| \leq \frac{j_{0,n}}{8(N+\frac{3}{4})(k-\frac{1}{4})\pi^2}\leq \frac{1 + \frac{1}{8(n-\frac{1}{4})(N+\frac{3}{4})\pi^2}}{8(k-\frac{1}{4})\pi}\leq \frac{1.01}{8(k-\frac{1}{4})\pi} \leq \frac{2}{e}\left(\frac{\epsilon}{5.2}\right)^{\frac{1}{K}}
\]
and 
\[
 |e_{k,N} j_{0,n}|\leq \frac{1.01}{8(k-\frac{1}{4})\pi}\leq \min_{0 \leq s \leq K-1} t_{s,T}(\epsilon)\approx \left(2^{2T}(T!)^2\epsilon\right)^{\frac{1}{2T}}.
\]
where we used the bound $j_{0,n}/((N+3/4)\pi)\leq (n-1/4)/(N+3/4) + 1/(8(n-1/4)(N+3/4)\pi^2)\leq 1.01$ for $n,N\geq 1$.
Equivalently, provided that  
%\[
% k\geq \frac{1.023e}{16\pi}\left(\frac{4.2}{\epsilon}\right)^{\frac{1}{K}}+\frac{1}{4}, \qquad k\geq \frac{1.023}{16\pi}\frac{\epsilon^{-1/(2T)}}{(T!)^{1/T}} + \frac{1}{4}.
%\]
\[
 k\geq 1.01\times\max( p_K(\epsilon), q_T(\epsilon) ), 
\]
where $p_K(\epsilon)$ and $q_T(\epsilon)$ are defined in~\eqref{eq:pKe} and~\eqref{eq:qTe}, respectively. 
That is, after truncating the sums in~\eqref{eq:HankelSum} we can use the decomposition 
on all the entries of $\mathbf{J}_0(\, \underline{\smash{j}}_0\,\underline{\smash{j}}_0^\intercal/j_{0,N+1})$ 
except for those in the first $\lfloor 1.01\max( p_K(\epsilon), q_T(\epsilon) )\rfloor$ rows. 
In practice, we take $K\geq 1$ and $T\geq 1$ to be the smallest integers so 
that $1.01\max( p_K(\epsilon), q_T(\epsilon) )\leq 30$.

To avoid employing a truncated version of~\eqref{eq:HankelSum} on the first few 
rows we compute $\underline{f} = \mathbf{J}_0(\, \underline{\smash{j}}_0\,\underline{\smash{j}}_0^\intercal/j_{0,N+1})\underline{c}$
using~\eqref{eq:HankelSum} with sums $\sum_{s=-K+1}^{K-1}\sum_{t=0}^{T-1}$ in $\mathcal{O}(N(\log N)^2/\log\!\log N)$ operations, discard
the first $\lfloor 1.01\max( p_K(\epsilon), q_T(\epsilon) )\rfloor$ entries of $\underline{f}$,
and then compute the discarded entries using direct summation in $\mathcal{O}(N)$ operations. 

In the same way as Section~\ref{sec:rearrange}, we can rearrange the computation so 
that only $2T+K-2$, instead of $(2K-1)T$, Fourier--Bessel evaluations are 
required.  Each Fourier--Bessel evaluation requires $2T+K-2$ Schl\"{o}milch 
evaluations. Hence, a discrete Hankel transform requires $(2T+K-2)^2$
Schl\"{o}milch evaluations. Since $K=\log(1/\epsilon)$, our algorithm for the DHT 
requires $\mathcal{O}(N(\log N)^2\log(1/\epsilon)^3/\log\!\log N)$ operations.

The exposition in this section may suggest that the same algorithmic ideas 
continue to work for $\nu>0$. While this is correct, the computational performance rapidly 
deteriorates as $\nu$ increases. This is because the vector
of Bessel roots $\underline{\smash{j}}_\nu$ and the vector of ratios 
$\underline{\smash{j}}_\nu^\intercal/j_{\nu,N+1}$ are distributed less like an 
equally-spaced grid for $\nu>0$. This significantly increases the number 
of terms required in the Taylor expansion and Neumann addition formula. 
For large $\nu>0$, different algorithmic ideas should be employed, for example,
one should take into account that $|J_\nu(z)|$ can be very 
small when $0\leq z < \nu$. It is expected that the matched asymptotic 
(transition) region of $J_\nu(z)$, when $z\approx \nu$, will be the most 
difficult to exploit when deriving a fast algorithm. 

\subsection{Numerical results for computing the discrete Hankel transform}
We have implemented our $\mathcal{O}(N(\log N)^2/\log\!\log N)$ algorithm 
for computing the DHT in MATLAB. It is publicly available from~\cite{Townsend_15_01}.

Figure~\ref{fig:HankelResults1} (left) shows the 
execution time for computing the DHT of order $0$ 
with direct summation and our
$\mathcal{O}(N(\log N)^2/\log\!\log N)$ algorithm with working tolerances 
of $\epsilon = 10^{-15}, 10^{-8}, 10^{-3}$. 
Our algorithm is faster than direct 
summation for $N\geq 6,\!000$ when $\epsilon = 10^{-15}$, for $N\geq 2,\!000$ 
when $\epsilon = 10^{-8}$, and for $N\geq 100$ when $\epsilon =10^{-3}$. 
Figure~\ref{fig:HankelResults1} (right) verifies that the selected
algorithmic parameters do achieve the desired error bound of 
$\|\underline{f}-\underline{f}^{{\rm quad}}\|_\infty = \mathcal{O}(\epsilon \|\underline{c}\|_1)$, 
where $\underline{f}$ is the vector of the computed values of~\eqref{eq:DHT} 
and $\underline{f}^{\rm quad}$ is the vector of values computed in 
quadruple precision using direct summation. In this experiment we 
take the entries of $\underline{c}$ to be realizations of independent Gaussians (with mean $0$ and variance $1$)
so that $\|\underline{c}\|_1 = \mathcal{O}(N^{1/2})$.
\begin{figure} 
 \centering 
\begin{minipage}{.49\textwidth}
 \begin{overpic}[width=\textwidth]{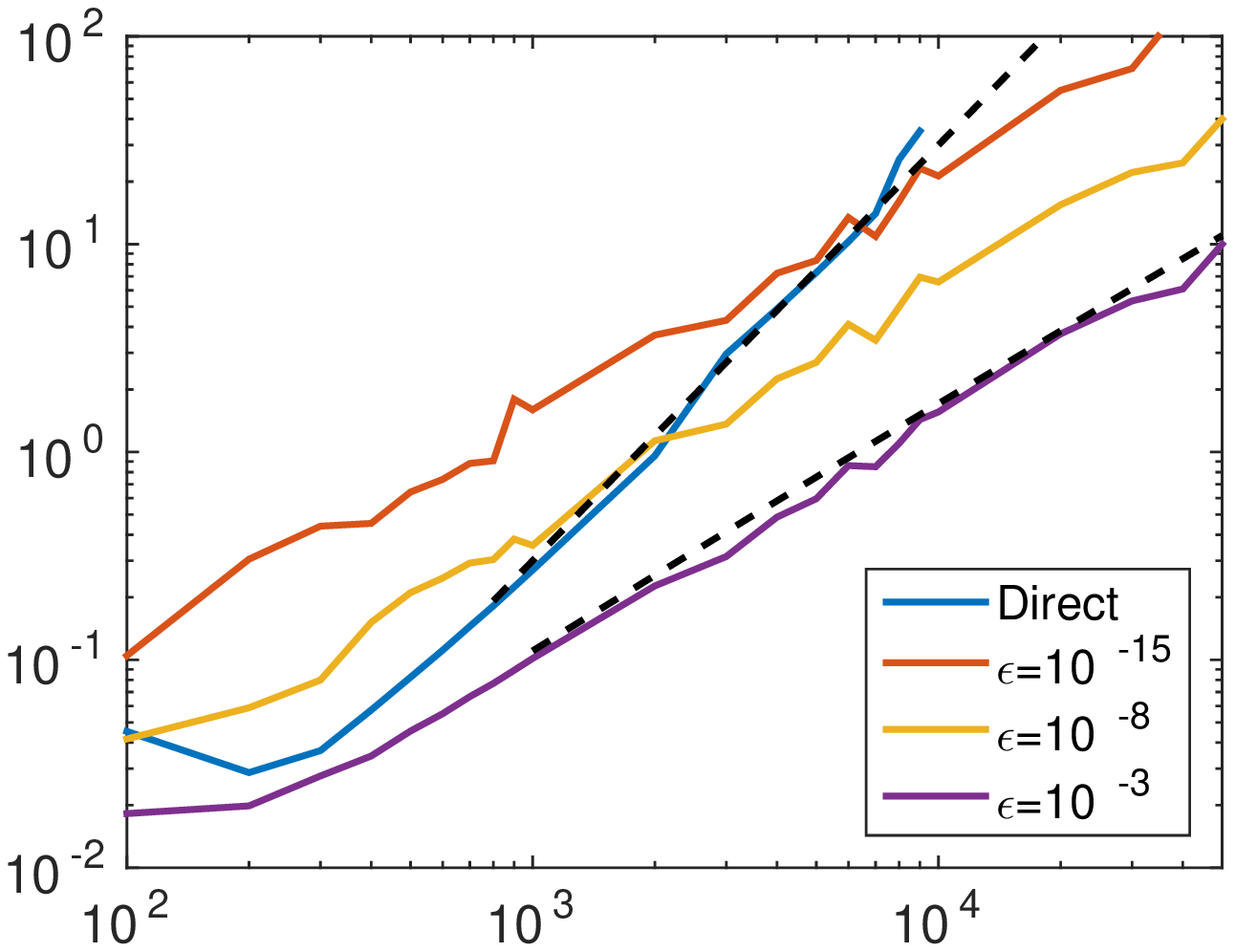}
 \put(60,55) {\footnotesize{\rotatebox{45}{$\mathcal{O}(N^2)$}}}
 \put(46,30) {\footnotesize{\rotatebox{31}{$\mathcal{O}(N(\log N)^2/\log\!\log N)$}}}
 \put(50,-1) {$N$}
 \put(0,18) {\rotatebox{90}{Execution time}}
 \end{overpic}
\end{minipage}
\begin{minipage}{.49\textwidth}
 \begin{overpic}[width=\textwidth]{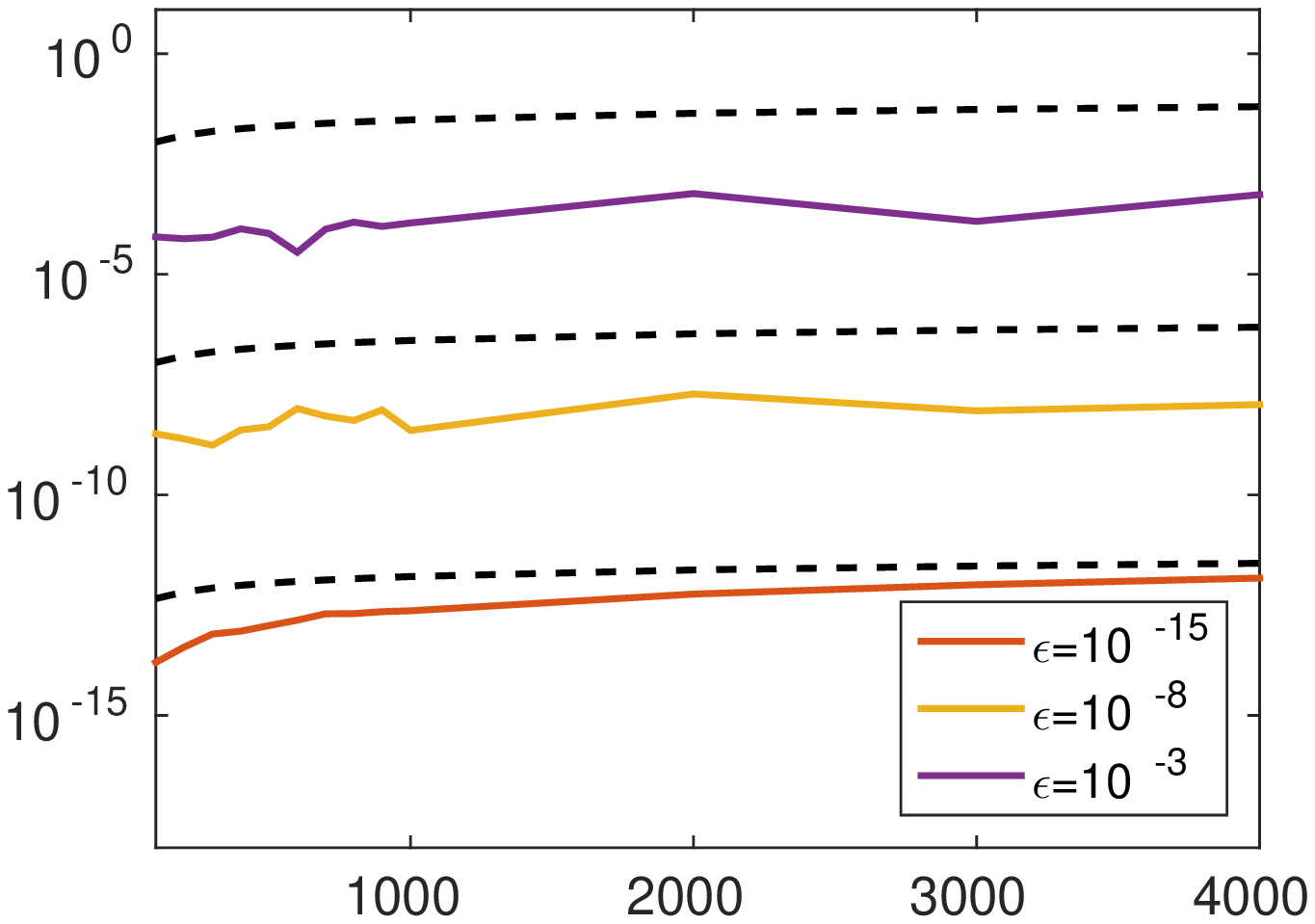}
    \put(45,30) {\footnotesize{$10^{-15}\left\|\underline{c}\right\|_1$}}
  \put(45,47) {\footnotesize{$10^{-8}\left\|\underline{c}\right\|_1$}}
  \put(45,62) {\footnotesize{$10^{-3}\left\|\underline{c}\right\|_1$}}
  \put(50,-1) {$N$}
  \put(-5,22) {\rotatebox{90}{$\|\underline{f} - \underline{f}^{{\rm quad}} \|_\infty$}}
   \end{overpic}
\end{minipage}
 \caption{Left: Execution time for computing the DHT in~\eqref{eq:DHT} using 
 direct summation and our $\mathcal{O}(N(\log N)^2/\log\!\log N)$ 
 algorithm with $\epsilon=10^{-15},10^{-8},10^{-3}$. Right: The maximum absolute error in the computed vector 
 $\underline{f}$. Here, we compare the computed error (solid lines) with the 
 expected error bound $\mathcal{O}(\epsilon \|\underline{c}\|_1)$.}
 \label{fig:HankelResults1}
\end{figure}

The DHT is, up to a diagonal scaling, its own 
inverse as $N\rightarrow\infty$. In particular, by~\cite{Johnson_87_01} we have, as $N\rightarrow\infty$,
\begin{equation}
 \frac{1}{j_{0,N+1}^2}\mathbf{J}_0(\, \underline{\smash{j}}_0\,\underline{\smash{j}}_0^\intercal/j_{0,N+1} ) D_{\underline{v}} \mathbf{J}_0(\, \underline{\smash{j}}_0\,\underline{\smash{j}}_0^\intercal/j_{0,N+1} ) D_{\underline{v}} \sim I_{N},
\label{eq:selfinverse}
\end{equation} 
where $D_{\underline{v}}$ is a diagonal matrix with entries $v_{n} = 2/J_{1}(j_{0,n})^2$, and $I_N$ is the $N\times N$ identity matrix. 
We can use the relation~\eqref{eq:selfinverse} to verify our algorithm for the DHT. Figure~\ref{fig:HankelResults2} 
shows the error $\|\underline{\tilde{c}} - \underline{c}\|_\infty$, where $\underline{c}$ is a column 
vector with entries drawn from independent Gaussians and
\begin{equation}
 \underline{\tilde{c}} =  \frac{1}{j_{0,N+1}^2}\mathbf{J}_0(\, \underline{\smash{j}}_0\,\underline{\smash{j}}_0^\intercal/j_{0,N+1} ) D_{\underline{v}} \mathbf{J}_0(\, \underline{\smash{j}}_0\,\underline{\smash{j}}_0^\intercal/j_{0,N+1} ) D_{\underline{v}}\underline{c}.
\label{eq:inverseTest}
\end{equation} 
For small $N$, $\|\underline{\tilde{c}} - \underline{c}\|_\infty$ will be large because~\eqref{eq:selfinverse}
only holds as $N\rightarrow\infty$. However, even for large $N$, 
we observe that $\|\underline{\tilde{c}} - \underline{c}\|_\infty$ grows like 
$\mathcal{O}(N^{3/2})$. This can be explained by tracking the accumulation of 
numerical errors. 
Since the entries of $\underline{c}$ are drawn from independent Gaussians and $v_n=\mathcal{O}(n)$ we have 
$\|D_{\underline{v}}\underline{c}\|_1 = \mathcal{O}(N^{3/2})$ and hence, 
we expect $\|\mathbf{J}_0(\, \underline{\smash{j}}_0\,\underline{\smash{j}}_0^\intercal/j_{0,N+1} ) D_{\underline{v}}\underline{c}\|_\infty=\mathcal{O}(\epsilon\|D_{\underline{v}}\underline{c}\|_1)=\mathcal{O}(\epsilon N^{3/2})$. 
By the same reasoning we expect $\|\mathbf{J}_0(\, \underline{\smash{j}}_0\,\underline{\smash{j}}_0^\intercal/j_{0,N+1} ) D_{\underline{v}}\mathbf{J}_0(\, \underline{\smash{j}}_0\,\underline{\smash{j}}_0^\intercal/j_{0,N+1} ) D_{\underline{v}}\underline{c}\|_\infty=\mathcal{O}(\epsilon N^{7/2})$. 
Finally, this gets multiplied by $j_{0,N+1}^{-2}=\mathcal{O}(N^{-2})$ in~\eqref{eq:inverseTest} 
so we expect $\|\underline{\tilde{c}} - \underline{c}\|_\infty=\mathcal{O}(N^{3/2})$.
If in practice we desire 
$\|\underline{\tilde{c}} - \underline{c}\|_\infty=\mathcal{O}(1)$, then it is 
sufficient to have $|c_n| = o(n^{-2})$. 
\begin{figure} 
 \centering 
 \begin{overpic}[width=.49\textwidth]{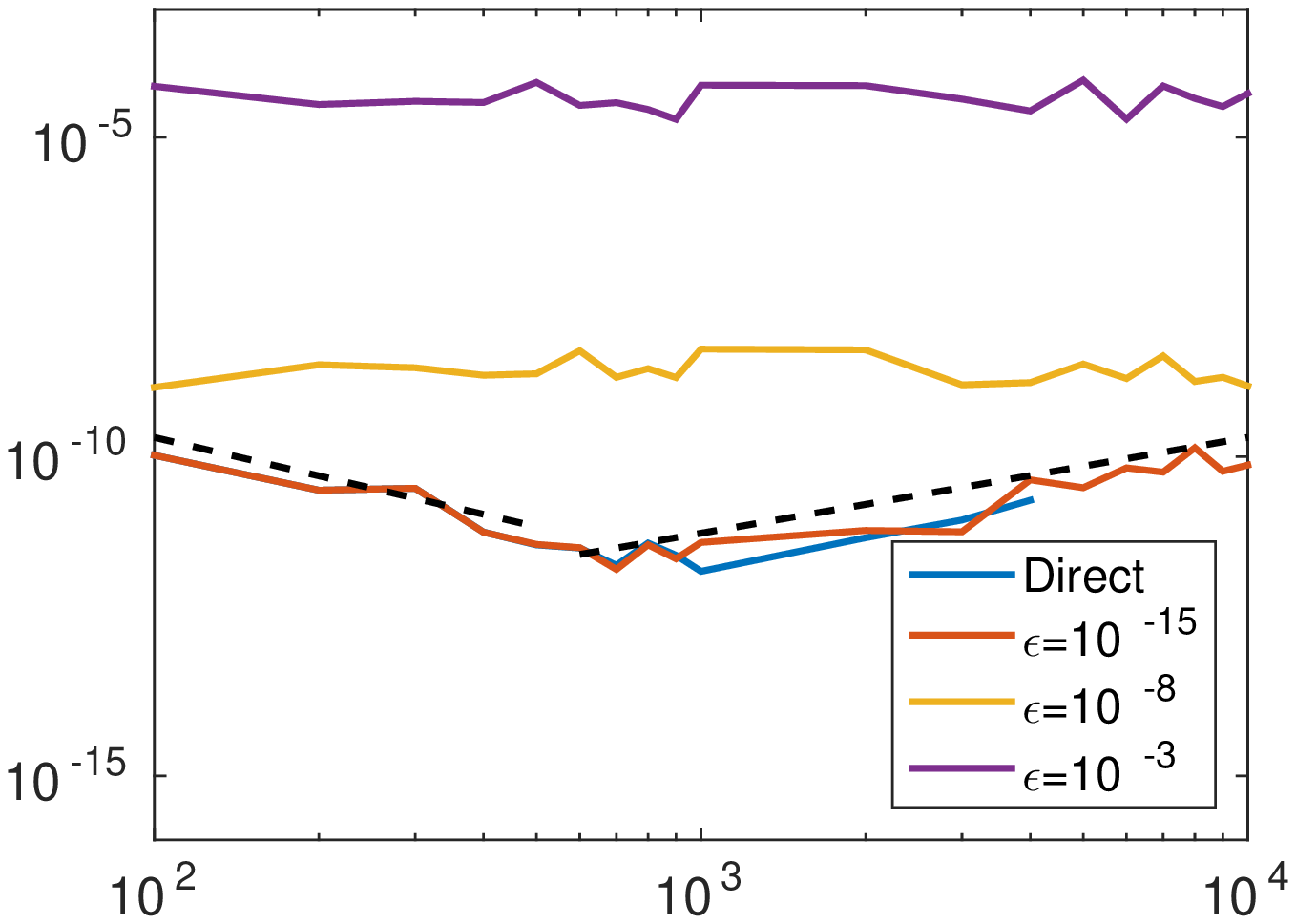}
 \put(52,32) {\footnotesize{\rotatebox{12}{$\mathcal{O}(N^{3/2})$}}}
 \put(23,36) {\footnotesize{\rotatebox{-12}{$\mathcal{O}(N^{-2})$}}}
 \put(50,-1.5) {$N$}
 \put(-4,27) {\rotatebox{90}{$\|\underline{\tilde{c}} - \underline{c}\|_\infty$}}
 \end{overpic}
\caption{The maximum absolute error $\|\underline{\tilde{c}}-\underline{c}\|$, where $\underline{c}$ and $\underline{\tilde{c}}$
satisfy~\eqref{eq:inverseTest}. Here, the DHT was computed with direct summation 
and our algorithm with $\epsilon=10^{-3},10^{-8},10^{-15}$. As $N\rightarrow\infty$ 
the DHT is its own inverse, up to a diagonal scaling. For $N<700$ the error 
$\|\underline{\tilde{c}}-\underline{c}\|$ decays like $\mathcal{O}(N^{-2})$ because~\eqref{eq:selfinverse}
only holds asymptotically as $N\rightarrow\infty$. For $N> 700$ the error 
$\|\underline{\tilde{c}}-\underline{c}\|$ grows like $\mathcal{O}(N^{3/2})$ from 
numerical error accumulation.}
 \label{fig:HankelResults2}
\end{figure}

\section*{Conclusion}
An asymptotic expansion of Bessel functions for large arguments is 
carefully employed to derive a numerically stable $\mathcal{O}(N(\log N)^2/\log\!\log N)$ 
algorithm for evaluating Schl\"{o}milch and Fourier--Bessel expansions as well as 
computing the discrete Hankel transform. All algorithmic parameters are selected based 
on error bounds to achieve a near-optimal computational cost for any accuracy goal. 
For a working accuracy of $10^{-15}$, the algorithm is faster than direct summation for 
evaluating Schl\"{o}milch expansions when $N\geq 100$, Fourier--Bessel 
expansions when $N\geq 700$, and the discrete Hankel transform when $N\geq 6,\!000$. 

\section*{Acknowledgments}
I thank the authors and editors of the Digital Library of 
Mathematical Functions (DLMF). 
Without it this paper would have taken longer. I also thank 
the referees for their time and consideration.

\end{document}